\newcommand{\alq}{\frac{{\mathbf a} + \bm\lambda}{q}}
\newcommand{\Section}[1]{\section{#1} \setcounter{equation}{0}}
\newcommand\R{\mathbb{R}}
\newcommand\Z{\mathbb{Z}}
\newcommand\N{\mathbb{N}}
\newcommand{\vv}[1]{{\mathbf{#1}}}
\newtheorem{thm}{Theorem}
\newtheorem{cor}{Corollary}
\newtheorem{thp}{Proposition}
\def\hs{{\mathcal H}^s}
\def\cS{\mathcal S}
\def\cR{\mathcal R}
\def\cU{\mathcal U}
\def\cM{\mathcal M}
\def\cD{\mathcal D}
\newcommand{\f}{\mathbf{f}}
\begin{document}

\renewcommand{\baselinestretch}{1.2}

\title[Diophantine Approximation on manifolds: the convergence theory]{Diophantine approximation on manifolds and the distribution of rational points: contributions \\ to the convergence theory}

\author{ V. Beresnevich, R.C. Vaughan,  S. Velani}
\address{RCV: Department of Mathematics, McAllister Building,
Pennsylvania State University, University Park, PA 16802-6401,
U.S.A.} \email{rvaughan@math.psu.edu}
\author{E. Zorin   }
\address{VB, SV, EZ: Department of Mathematics, University of York, Heslington, York, YO10
5DD, U.K.} \email{victor.beresnevich@york.ac.uk, sanju.velani@york.ac.uk, evgeniy.zorin@york.ac.uk }

\date{\today}

\thanks{VB and SV:  Research supported in part by EPSRC Programme Grant: EP/J018260/1.}

\thanks{RCV: Research supported in part by NSA grant number H98230-09-1-0015 and H98230-06-0077.}

\thanks{EZ:  Research supported in part by EPSRC First Grant:  	EP/M021858/1.}

\maketitle

\vspace*{-3ex}

\begin{center}
\small {\em 
In memory of Klaus Roth (29 October 1925 -- 10 November 2015)}
\end{center}

\begin{abstract}
In this paper we develop the convergence theory of simultaneous, inhomogeneous Diophantine approximation on manifolds. A consequence of our main result is that if the manifold $\cM \subset \R^n$ is of dimension strictly greater than $(n+1)/2$ and satisfies a natural non-degeneracy condition, then $\cM$ is of Khintchine type  for convergence.
%Even this constitutes the first precise and general convergent result in the theory  beyond the case of planar curves.
The key lies in  obtaining  essentially the best possible upper bound regarding the distribution of rational points near manifolds.
\end{abstract}

{\footnotesize
\noindent\emph{Key words and phrases}: simultaneous Diophantine approximation on manifolds, metric theory, Khintchine theorem, Hausdorff measure and dimension, rational points near manifolds

\noindent\emph{Mathematics Subject Classification 2000}: Primary 11J83; Secondary
11J13, 11K60. }

\Section{Introduction and statement of results \label{convintro}}

\subsection{The setup\label{setup}} Throughout, we suppose that $m\le d$, $n=m+d$ and that $\mathbf f=(f_1,\ldots,f_m)$ is defined on $\mathcal U=[0,1]^d$.  Suppose further that $\partial\mathbf f/\partial\alpha_i$ and $\partial^2\mathbf f/\partial\alpha_i\partial\alpha_j$ exist and are continuous on $\mathcal U$, and that there is an $\eta>0$ such that for all $\bm\alpha\in\cU$
\begin{equation}
\label{e:one1} \left|
\det\left(
\frac{\partial^2 f_j}{\partial\alpha_1\partial\alpha_i}(\bm\alpha)
\right)_{\substack{
1\le i\le m\\
1\le j\le m
}}
\right|\ge\eta.
\end{equation}

\noindent Throughout $\R^+=[0,+\infty)$ is the set of non-negative real numbers. Let $\psi: \R^+ \to \R^+ $ be a function such that
 $ \psi(r)\rightarrow0$ as $r\rightarrow\infty$ and $\bm\theta=(\bm\lambda,\bm\gamma) \in \R^d \times \R^m$.  Now for a fixed $ q \in \N $,   consider the set
 \begin{equation}
\label{sv1}
\mathcal R(q,\psi,\bm\theta):= \left\{(\mathbf a, \mathbf b)  \in \Z^d \times \Z^m    :
  \begin{array}{l}
  \alq \in  \mathcal U \, ,  \\[1ex]
    |q \mathbf f\big(\alq\big) - \bm\gamma - {\mathbf b} | < \psi(q)
    \end{array}
\right\}
\end{equation}
and let
$$ A(q,\psi,\bm\theta) :=  \# \mathcal R(q,\psi,\bm\theta) \, .
$$

\noindent  The map $\mathbf f  : \mathcal U  \to \R^m$ naturally gives rise to the $d$-dimensional  manifold
\begin{equation}
\label{monge}
 \cM_{\mathbf f} := \left\{ (\alpha_1, \ldots, \alpha_d, f_1(\boldsymbol{\alpha} ), \ldots, f_m(\boldsymbol{\alpha})) \in \R^n : \boldsymbol{\alpha} =(\alpha_1, \ldots, \alpha_d) \in \mathcal U  \right\}  \,
\end{equation}
embedded in $\R^n$.  Recall that  by the  Implicit  Function Theorem  any smooth manifold $\cM$  can be locally defined in  this manner; i.e. with a Monge parametrisation. The upshot is that,  $ A(q,\psi,\bm\theta)$   counts the number of shifted rational points
$$
\textstyle{\big( \frac{a_1+\lambda_1}{q}, \dots, \frac{a_d+\lambda_d}{q}, \frac{b_1+\gamma_1}{q}, \ldots, \frac{b_m+\gamma_m}{q} \big) }  \in \R^n
$$
that  lie (up to an absolute constant) within the  $\psi(q)/q$-neighbourhood of     $\cM_{\mathbf f}$.  Before stating our counting results it is worthwhile to  compare condition (\ref{e:one1})  imposed  on the Jacobian of $\mathbf f$ with that of non-degeneracy as defined by  Kleinbock and Margulis in their pioneering work \cite{Kleinbock-Margulis-98:MR1652916}.  In this paper they  prove the  Baker-Sprind\v{z}uk `extremality' conjecture in the theory of  Diophantine approximation on manifolds.

The above map  $\mathbf f  : \mathcal U  \to \R^m   : \boldsymbol{\alpha} \mapsto \f(\boldsymbol{\alpha})=(f_1(\boldsymbol{\alpha}),\dots,f_m(\boldsymbol{\alpha}))$   is said to be \emph{$l$-non--degenerate at} $\boldsymbol{\alpha}\in \mathcal U$ if there exists some integer  $l \geq 2 $ such that $\f$  is $l$ times continuously differentiable on some
sufficiently small ball centred at $\boldsymbol{\alpha}$ and the partial derivatives
of $\f$ at $\boldsymbol{\alpha}$ of orders $2$ to $l$ span $\R^m$. The map $\f$ is
called \emph{non--degenerate} if it is $l$-non--degenerate at almost every (in
terms of $d$--dimensional Lebesgue measure) point in $\cU$; in turn
the manifold $ \cM_{\mathbf f}$ is also said to be non--degenerate. Non-degenerate manifolds  are smooth
sub-manifolds of $\R^n$ which are sufficiently curved so as to
deviate from any hyperplane at a polynomial rate see \cite[Lemma~1(c)]{Beresnevich-02:MR1905790}.  As is well known, see \cite[p.~341]{Kleinbock-Margulis-98:MR1652916}, any real connected analytic manifold not contained in any hyperplane of $\R^n$  is non--degenerate.

It follows from the definition of $l$-non-degeneracy, that condition (\ref{e:one1})  imposed  on $\f$   implies  that $\f $ is $2$-non-degenerate at every point. Although (\ref{e:one1})  is fairly generic, the converse is not always true even if we allow rotations of the coordinate system. The submanifold $(x,y,z_1,\dots,z_k,x^2,xy,y^2)$ of $\R^{k+5}$ provides a counterexample\footnote{The authors are grateful to David Simmons for providing the example.}.

\subsection{Results on counting rational points}\label{counting section}
Throughout, the Vinogradov symbols $\ll$ and $\gg$ will be used to indicate an inequality with an unspecified positive multiplicative constant.   If $a \ll b $ and $ a \gg b $, we write $a \asymp b $ and say that the two quantities $a$ and $b$ are comparable. Throughout the paper the constants will only depend on the dimensions $n$ and $d$ and the map~$\vv f$.

Observe  that for $ q$ sufficiently large so that $\psi(q) \leq  1/2 $ , we have that
\begin{equation}\label{A}
A(q,\psi,\bm\theta) =    \# \left\{\mathbf a \in \Z^d   :
  \begin{array}{l}
  \alq \in  \mathcal U \, ,  \\[1ex]
    \|q \mathbf f\big(\alq\big) - \bm\gamma  \| < \psi(q)
    \end{array}
\right\} \,
\end{equation}

where as usual $ \| \vv x \| :=  \max_{1 \le i \le m} \| x_i \|  $ for any  $\vv x  \in \R^m$.
In particular, when $0 < \psi(q) \leq  1/2  $,   the  obvious heuristic argument leads us to the following estimate:
\begin{equation}\label{heur}
 A(q,\psi,\bm\theta)   \ \asymp \  q^n  \left( \frac{\psi(q)}{q}  \right)^m  = \,   \psi(q)^m \, q^d  \, .
\end{equation}

We establish the following  upper bound result.

\begin{thm}
\label{t:svthm1}  Suppose that $\mathbf f  : \mathcal U  \to \R^m $ satisfies  \eqref{e:one1} and $\bm\theta \in \R^n$.   Suppose that  $0 < \psi(q) \leq 1/2  $.     Then
\begin{equation}
\label{sv2} A(q,\psi,\bm\theta)     \, \ll \,   \psi(q)^m \, q^d  \ + \   (q \, \psi(q))^{-1/2} q^d    \,   \max \{1, \log (q \, \psi(q))  \}  \; ,
\end{equation}
where the implied constant is independent of $q$, $\bm\theta$ and $\psi$ but may depend on $\mathbf f$.
\end{thm}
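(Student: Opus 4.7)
The plan is to bound $A(q,\psi,\bm\theta)$ via a Fourier majorization that reduces the count to exponential sums, and then to estimate those sums by a Weyl-differencing / van der Corput-type argument that exploits the non-degeneracy condition \eqref{e:one1}.

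\textbf{Fourier reduction and main term.} Majorize the indicator $\mathbf 1_{\{\|\mathbf y\|<\psi(q)\}}$ on $\mathbb T^m$ by a smooth periodic function $\Psi$ constructed as a tensor product of 1D Selberg--Beurling--Vaaler kernels, so that $\widehat\Psi(\mathbf 0)\asymp\psi(q)^m$ and $|\widehat\Psi(\mathbf h)|\ll\psi(q)^m$ with $\widehat\Psi$ supported in $|\mathbf h|_\infty\lesssim 1/\psi(q)$. Combining with \eqref{A} yields
\[
A(q,\psi,\bm\theta)\;\leq\;\sum_{\mathbf h\in\mathbb Z^m}\widehat\Psi(\mathbf h)\,e(-\mathbf h\cdot\bm\gamma)\,S_\mathbf h,\quad S_\mathbf h:=\sum_{\substack{\mathbf a\in\mathbb Z^d\\\alq\in\cU}}e\!\bigl(\mathbf h\cdot q\mathbf f(\alq)\bigr).
\]
The $\mathbf h=\mathbf 0$ term contributes $\widehat\Psi(\mathbf 0)(q^d+O(q^{d-1}))\ll\psi(q)^m q^d$, producing the first summand of \eqref{sv2}.

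\textbf{Exponential sum estimate (the crux).} The aim is $|S_\mathbf h|\ll q^{d-1/2}|\mathbf h|^{1/2}(\log q|\mathbf h|)^{O(1)}$ for each $\mathbf h\neq\mathbf 0$. The phase $\Phi_\mathbf h(\mathbf a):=\mathbf h\cdot q\mathbf f(\alq)$ satisfies
\[
\partial^2\Phi_\mathbf h/\partial a_1\partial a_i\;=\;q^{-1}\bigl(\mathbf h^{\top}M(\bm\alpha)\bigr)_i,
\]
where $M(\bm\alpha)=(\partial^2 f_j/\partial\alpha_1\partial\alpha_i)_{j,i}$ is the matrix of \eqref{e:one1}. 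Since $|\det M|\geq\eta$, for every $\mathbf h\neq\mathbf 0$ some component of $\mathbf h^{\top}M$ has absolute value $\gg|\mathbf h|$; after partitioning $\cU$ into $O(1)$ pieces, one may fix a single index $i=i(\mathbf h)\in\{1,\ldots,m\}$ witnessing $|(\mathbf h^{\top}M)_i|\gg|\mathbf h|$ uniformly on each piece. On such a piece, a Weyl shift in $a_1$ expresses $|S_\mathbf h|^2$ as a sum of shifted sums $\sum_{\mathbf a}e(D_{h_1}^{a_1}\Phi_\mathbf h)$ whose phases have first derivative in $a_i$ of size $\sim h_1|\mathbf h|/q$; the Kusmin--Landau lemma applied to the $a_i$-sum, together with the trivial bound in the remaining $d-2$ coordinates, yields the target estimate.

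\textbf{Assembly.} Inserting this into the Fourier expansion,
\[
\sum_{\mathbf h\neq\mathbf 0}|\widehat\Psi(\mathbf h)\,S_\mathbf h|\;\ll\;q^{d-1/2}\psi(q)^m\!\!\!\sum_{0<|\mathbf h|_\infty\leq 1/\psi(q)}\!\!\!|\mathbf h|^{1/2}(\log)^{O(1)}\;\ll\;q^d(q\psi(q))^{-1/2}\max\{1,\log(q\psi(q))\},
\]
using $\sum_{0<|\mathbf h|_\infty\leq R,\,\mathbf h\in\mathbb Z^m}|\mathbf h|^{1/2}\asymp R^{m+1/2}$ and a dyadic decomposition (which supplies the logarithmic factor). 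Together with the main term this yields \eqref{sv2}.

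\textbf{Main obstacle.} The principal difficulty is the exponential-sum step. The hypothesis \eqref{e:one1} controls only one \emph{row} of the Hessian of $\mathbf h\cdot\mathbf f$, so a direct multi-variable stationary-phase estimate in all $d$ variables is unavailable. One must carefully choose, for each $\mathbf h$, a favorable pair of variables $(a_1,a_{i(\mathbf h)})$, partition $\cU$ accordingly into boundedly many pieces, and manage the Weyl differencing so that the sharp $\tfrac12$-power saving is achieved uniformly with only a logarithmic loss. Handling the transition between the regime $h_1|\mathbf h|\lesssim q$ (where Kusmin--Landau provides a genuine saving) and $h_1|\mathbf h|\gtrsim q$ (where only the trivial bound is available) is the delicate point.
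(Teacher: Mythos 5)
Your strategy --- Fourier majorization by Beurling--Selberg--Vaaler kernels followed by pointwise van der Corput/Weyl estimates for the exponential sums $S_\mathbf h$ --- is a genuinely different route from the one taken in the paper, and the paper's route is chosen precisely to avoid the difficulties you flag as the ``crux''. The paper (following Sprind\v{z}uk) first splits $\mathbf a = r\mathbf u + \mathbf v$ with $r\asymp(q\psi(q))^{1/2}$ and Taylor-expands $\mathbf f$ to second order, absorbing the $O(r^2/q)=O(\psi(q))$ quadratic error into the inequality defining the count. On each block the phase is then \emph{linear} in $\mathbf v$, so after a Fej\'er-kernel majorization the inner $\mathbf v$-sum factorizes \emph{exactly} into $d$ geometric series, each bounded by the unconditional elementary estimate $\min(r,\|\rho\|^{-1})$. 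No oscillatory lemma is invoked at this stage. The sum over block positions $\mathbf u$ is then replaced by an integral over $\bm\alpha$, only the $i=1$ factor is retained (the others are bounded trivially by $r$), and the change of variables $\omega_j=\partial f_j/\partial\alpha_1(\bm\alpha)$ --- whose Jacobian is exactly the matrix in \eqref{e:one1} --- reduces the integral to an elementary one that produces the single power of $\log r$.

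Against this background your proposed exponential-sum step has real gaps. First, Kusmin--Landau requires the first derivative of the phase in $a_i$ (after differencing by $k$ in $a_1$) to be monotone; that monotonicity is governed by third-order partial derivatives of $\mathbf f$, which are not assumed to exist --- the theorem is stated under a $C^2$ hypothesis only, and the number of monotonicity intervals is therefore uncontrolled. The paper's linearization eliminates this issue: once the $\mathbf v$-sum is a geometric series, the bound $\min(r,\|\rho\|^{-1})$ holds with no regularity or monotonicity assumption whatsoever, and the ``bad'' $\bm\alpha$ for which $\|\rho\|$ is small are handled by the integral. Second, the witnessing index $i(\mathbf h)$ with $|(\mathbf h^{\top}M(\bm\alpha))_i|\gg|\mathbf h|$ depends on both $\bm\alpha$ and the direction of $\mathbf h$; the sets on which a given $i$ witnesses the lower bound are not rectangles, they vary with $\mathbf h$, and summing $\mathbf a$ over such domains requires extra care (Abel summation, boundary terms) that is not addressed. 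The paper again sidesteps this completely: it always uses $i=1$ and lets the change of variables absorb the directional dependence, rather than choosing a favourable index. Finally, even granting a Kusmin--Landau bound, the cases where the derivative $k|\mathbf h|/q$ is close to an integer must be excised and bounded trivially, and an accounting of these excised ranges (and of the resulting logarithms) is missing. As written, the ``crux'' step does not close under the stated $C^2$ hypothesis, so the proposal has a genuine gap rather than merely a different-but-complete proof.
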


\noindent The following is a straightforward consequence of the theorem.  It states  that the upper bound (\ref{sv2}) coincides with  the heuristic estimate if $\psi(q)$ is not too small.

\begin{cor}
\label{c:svcor1} Suppose that $\mathbf f  : \mathcal U  \to \R^m $ satisfies  \eqref{e:one1} and $\bm\theta \in \R^n$. Suppose that $$ q^{-1/(2m+1)}(\log q)^{2/(2m+1)} \le \psi(q) \leq 1/2  \, . $$    Then for integers $q\ge2$ we have that
\begin{equation}
\label{sv3} A(q,\psi,\bm\theta)     \, \ll \,   \psi(q)^m \, q^d  \; .
\end{equation}
\end{cor}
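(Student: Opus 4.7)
The plan is to deduce Corollary~\ref{c:svcor1} directly from Theorem~\ref{t:svthm1}. Dividing \eqref{sv2} by $q^d$, it suffices to show that under the hypothesis the second term on the right is dominated by the first, i.e.\
\[
(q\psi(q))^{-1/2}\max\{1,\log(q\psi(q))\}\ \ll\ \psi(q)^m,
\]
or equivalently
\[
\max\{1,\log(q\psi(q))\}\ \ll\ q^{1/2}\psi(q)^{m+1/2}.
\]

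The exponents in the hypothesis are engineered to make this last inequality tight at its lower endpoint. Using the identity $(m+\tfrac12)/(2m+1)=\tfrac12$, substituting $\psi(q)=q^{-1/(2m+1)}(\log q)^{2/(2m+1)}$ into the right-hand side yields exactly $\log q$, while the constraint $\psi(q)\le\tfrac12$ gives $\log(q\psi(q))\le\log q$. So the desired inequality holds at the endpoint for all $q\ge 3$ with an absolute constant.

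To propagate it throughout the admissible range I would compare derivatives in $\psi$: the derivative of $q^{1/2}\psi^{m+1/2}$ is $(m+\tfrac12)q^{1/2}\psi^{m-1/2}$, that of $\log(q\psi)$ is $1/\psi$, so the first dominates whenever $(m+\tfrac12)q^{1/2}\psi^{m+1/2}\ge1$. This already holds at the endpoint (where the left-hand side equals $(m+\tfrac12)\log q$) and improves as $\psi$ grows, hence $q^{1/2}\psi^{m+1/2}-\log(q\psi)$ remains non-negative throughout $[q^{-1/(2m+1)}(\log q)^{2/(2m+1)},\,\tfrac12]$. The finitely many small $q$ (in practice only $q=2$) for which $\log q$ might be too small to beat the ``$\max\{1,\cdot\}$'' contribution are trivial: the hypothesis forces $\psi(q)$ to be bounded below away from zero, and $A(q,\psi,\bm\theta)=O(1)$ is absorbed into the implicit constant. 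The whole argument rests on the arithmetic identity that makes the endpoint match exactly, and no further input beyond Theorem~\ref{t:svthm1} is required.
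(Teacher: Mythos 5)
Your deduction from Theorem~\ref{t:svthm1} is correct, and it matches the paper's intent: the paper gives no explicit proof, calling the corollary ``a straightforward consequence of the theorem,'' and substituting the hypothesis into \eqref{sv2} is indeed the only thing to do. The key identity $(m+\tfrac12)/(2m+1)=\tfrac12$ making the exponents match at the endpoint is the right observation, and your handling of $q=2$ (where the hypothesis is actually empty, since $2^{-1/(2m+1)}(\log 2)^{2/(2m+1)}>\tfrac12$ for every $m\ge1$) is fine. One small remark: the derivative comparison is more machinery than needed. Since $\psi(q)\le\tfrac12<1$ gives $\log(q\psi(q))\le\log q$ for \emph{every} $\psi$ in the range, and since $q^{1/2}\psi^{m+1/2}$ is increasing in $\psi$ with value $\log q$ at the left endpoint, one has $\max\{1,\log(q\psi(q))\}\le\log q\le q^{1/2}\psi(q)^{m+1/2}$ for $q\ge3$ directly by transitivity, without tracking the sign of $g'$.
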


\subsection{Results on metric Diophantine approximation}\label{KTresults}

Given a function  $\psi: \R^+ \to \R^+ $ and   a point   $\bm\theta =(\theta_1,\ldots,\theta_n) \in \R^n$,  let  $\cS_n(\psi,\bm\theta)$ denote the set of $\vv y=(y_1,\dots,y_n)\in\R^n$ for which  there exists  infinitely many
$q\in\N$ such that $$ \|q\vv y- \bm\theta\|  =  \max_{1\le i\le n}\|q y_i- \theta_i\|<\psi(q) \, .  $$
In the case that the inhomogeneous part $\bm\theta $ is the origin, the corresponding set  $\cS_n(\psi):=\cS_n(\psi,\bm0)$ is the usual homogeneous set of simultaneously $\psi$--approximable points in $\R^n$.
In the case $\psi$ is $\psi_{\tau}:r\to r^{-\tau}$ with $\tau>0$, let us write
 $\cS_n(\tau,\bm\theta)$  for $\cS_n(\psi,\bm\theta)$ and $\cS_n(\tau)$  for $\cS_n(\tau,\bm0)$.   Note that in view
of Dirichlet's theorem ($n$-dimensional simultaneous version),
$\cS_n(\tau)= \R^n $ for any $\tau \leq 1/n$.

In the general discussion above we have not made any assumption on $\psi$ regarding monotonicity.   Thus the integer support of $\psi$ need not be $\N$.   Throughout,   $\mathcal N  \subset \N  $  will denote  the integer support of $\psi$.   That is the set of $q\in\N$ such that $\psi(q)>0 $. Regarding the set  $\cS_n(\psi,\bm\theta)$, measure theoretically, this is equivalent  to saying that we are only interested in integers $q$ lying in some  given set $\mathcal N $ such as the set of primes or squares or powers of two.   The theory of restricted Diophantine approximation in $\R^n$  is both topical and well developed for certain sets $\mathcal N $  of number theoretic interest -- we refer the reader to \cite[Chp 6]{Harman-1998a} and \cite[\S12.5]{Beresnevich-Dickinson-Velani-06:MR2184760} for further details.  However, the  theory of restricted Diophantine approximation on manifolds is not so well developed.

Armed with Corollary \ref{c:svcor1}, we are able to establish the following convergent statement for the $s$-dimensional Hausdorff measure $\hs$ of $\cM_{\mathbf f} \cap \cS_n(\psi,\bm\theta)$.  Note that if $ s >  d=\dim \cM_{\mathbf f}   $, then   $\hs \left( \cM_\mathbf f \cap \cS_n(\psi,\bm\theta) \right) = 0$  irrespective of $\psi$. This follows immediately from the definition of Hausdorff dimension and that fact that
$$
\dim (\cM_{\mathbf f}\cap \cS_n(\psi,\bm\theta))  \, \leq  \, \dim \cM_{\mathbf f}  \, .
$$

\begin{thm}\label{t:genconv}
Let $\bm\theta \in \R^n$ and  $\psi: \R^+ \to \R^+ $ be a  function such that
$ \psi(r)\rightarrow0$ as $r\rightarrow\infty$ and
\begin{equation}\label{condpsi}
\psi(q)\ge q^{-1/(2m+1)}(\log q)^{2/(2m+1)}\qquad\text{for all $q \in \mathcal N $,}
\end{equation}
where as $\mathcal{N}=\{q\in\N:\psi(q)>0\}$.
Let $0< s \le d   $  and $\mathbf f : \mathcal U \to  \R^m$   satisfy the following condition
\begin{equation}\label{ndg}
 \hs \big( \big\{ \bm\alpha \in \mathcal U:  \text{the l.h.s. of \eqref{e:one1}}    = 0 \big\}\big)  = 0.
\end{equation}
Then
\begin{equation*}
\hs \big( \cM_{\mathbf f}\cap \cS_n(\psi,\bm\theta) \big) = 0   \qquad \text{whenever} \qquad  \sum_{q=1}^{\infty}  \left( \textstyle{\frac{\psi(q)}{q}}  \right)^{s+m} q^n   \ < \ \infty \, .
\end{equation*}
\end{thm}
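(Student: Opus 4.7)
The plan is to replace the problem on $\cM_\mathbf{f}$ by a problem on the parameter domain $\cU$, express the relevant set as a lim sup of small balls centred at shifted rationals $(\mathbf{a}+\bm\lambda)/q$, and apply the Hausdorff--Cantelli lemma with the cardinality bound from Corollary~\ref{c:svcor1}. Since $\mathbf{f}\in C^2(\cU)$ with $\cU$ compact, the Monge parametrisation $\bm\alpha\mapsto(\bm\alpha,\mathbf{f}(\bm\alpha))$ is bi-Lipschitz, so it suffices to show $\hs(E)=0$ for
$$E:=\bigl\{\bm\alpha\in\cU:(\bm\alpha,\mathbf{f}(\bm\alpha))\in\cS_n(\psi,\bm\theta)\bigr\}.$$
By \eqref{ndg} the ``bad'' set $Z$ on which the l.h.s.\ of \eqref{e:one1} vanishes is $\hs$-null, and $\cU\setminus Z$ is open by continuity of $\det\hes$. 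I would cover $\cU\setminus Z$ by a countable family of open subcubes $\cU_k$ on each of which $|\det\hes|\ge\eta_k>0$, reducing the task to proving $\hs(E\cap\cU_k)=0$ for every $k$.

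Fix such a $\cU_k$. If $\bm\alpha\in E\cap\cU_k$, then for infinitely many $q\in\mathcal N$ there exist $\mathbf{a}\in\Z^d$ and $\mathbf{b}\in\Z^m$ with $\|q\bm\alpha-\mathbf{a}-\bm\lambda\|<\psi(q)$ and $\|q\mathbf{f}(\bm\alpha)-\mathbf{b}-\bm\gamma\|<\psi(q)$. Writing $\bm\alpha_0=(\mathbf{a}+\bm\lambda)/q$, the Mean Value Theorem combined with the uniform $C^1$-bound on $\mathbf{f}$ yields $\|q\mathbf{f}(\bm\alpha_0)-\mathbf{b}-\bm\gamma\|\le C\psi(q)$ for a constant $C=C(\mathbf{f})$. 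Hence $(\mathbf{a},\mathbf{b})\in\cR(q,C\psi,\bm\theta)$ and $\bm\alpha$ lies in the closed ball of radius $\psi(q)/q$ about $\bm\alpha_0$. Therefore
$$E\cap\cU_k\;\subseteq\;\bigcap_{Q\in\N}\;\bigcup_{\substack{q\in\mathcal N\\ q\ge Q}}\;\bigcup_{(\mathbf{a},\mathbf{b})\in\cR(q,C\psi,\bm\theta)}B\!\left(\frac{\mathbf{a}+\bm\lambda}{q},\frac{\psi(q)}{q}\right),$$
a $\limsup$ of balls of diameter $\asymp\psi(q)/q\to0$.

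The Hausdorff--Cantelli lemma now gives $\hs(E\cap\cU_k)=0$ as soon as $\sum_{q}A(q,C\psi,\bm\theta)(\psi(q)/q)^s<\infty$. Since the threshold \eqref{condpsi} is preserved under replacing $\psi$ with $C\psi$, Corollary~\ref{c:svcor1} applied locally on a slight enlargement of $\cU_k$ (where \eqref{e:one1} holds with uniform constant) yields $A(q,C\psi,\bm\theta)\ll\psi(q)^m q^d$. Substituting,
$$\sum_{q\in\mathcal N}\psi(q)^m q^d\left(\frac{\psi(q)}{q}\right)^{\!s}\;=\;\sum_{q\in\mathcal N}\left(\frac{\psi(q)}{q}\right)^{\!s+m}q^n,$$
which is finite by hypothesis; summing over $k$ completes the argument.

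The main obstacle I foresee is the localisation step: Corollary~\ref{c:svcor1} is stated globally on $\cU=[0,1]^d$ under a \emph{uniform} lower bound in \eqref{e:one1}, whereas \eqref{ndg} only furnishes such a bound locally, off an $\hs$-null set. Justifying that the corollary may be applied on each piece $\cU_k$ (either by a covering/extension argument or by re-running the proof of Theorem~\ref{t:svthm1} locally) is the delicate point; the remaining ingredients, namely the Mean Value Theorem and the Hausdorff--Cantelli lemma, are entirely standard.
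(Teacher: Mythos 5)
Your proposal is correct and follows essentially the same route as the paper's proof: reduce to the parameter domain via the bi-Lipschitz Monge chart, decompose off the null set where \eqref{e:one1} fails into countably many rectangles on which \eqref{e:one1} holds with a fixed constant, transfer $\bm\alpha$ to the shifted rational $(\mathbf a+\bm\lambda)/q$ via the Mean Value Theorem so that Corollary~\ref{c:svcor1} (applied to $C\psi$) counts the balls, and finish with the Hausdorff--Cantelli lemma. The localisation issue you flag is handled in the paper precisely as you suggest: since $\{\bm\alpha : \text{l.h.s.\ of }\eqref{e:one1}=0\}$ is closed, its complement in $\cU$ decomposes into countably many closed rectangles, and Theorem~\ref{t:svthm1} and Corollary~\ref{c:svcor1} apply to each rectangle (after an affine change of variables) with an implied constant depending on the rectangle, which is harmless since the final sum is estimated piece by piece.
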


\bigskip

\noindent\emph{Remark}~1.
Recall, that in view of the discussion in \S\ref{setup} the condition imposed on $\f$ in the above theorem and its corollaries below are equivalent to saying that the manifold is $2$-non-degenerate everywhere except on a set of Hausdorff $s$-measure zero.

Now we consider two special cases of Theorem~\ref{t:genconv}. First suppose the integer support of $\psi$ is along a lacunary sequence.  In particular,  consider the concrete situation that $  \mathcal N =\{2^t: t \in \N \}$.  The following statement is valid for any $ n = d+m $  and to the best of our knowledge is first result of its type even within the setup of planar curves ($d=m=1$).

\begin{cor}\label{c:svlacunary}
Let $\bm\theta \in \R^n$ and $\psi: \R^+ \to \R^+ $ be a  function such that
 $ \psi(r)\rightarrow0$ as $r\rightarrow\infty$  and $  \mathcal N =\{2^t: t \in \N \}$.
  Let  $$
 d -  \textstyle{\frac{n}{2(m+1)}}< s \le d$$
 and assume that  $\mathbf f : \mathcal U \to  \R^m$ satisfies \eqref{ndg}.
 Then
 \begin{equation*}
\hs \left( \cM_{\mathbf f}\cap \cS_n(\psi,\bm\theta) \right) = 0   \qquad {  if } \qquad  \sum_{t=1}^{\infty}  \left( 2^{-t} \,  \psi(2^t)\right)^{s+m}  2^{t n}  \ < \ \infty \, .
\end{equation*}
\end{cor}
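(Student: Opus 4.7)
The aim is to reduce the corollary to Theorem~\ref{t:genconv} by splitting $\psi$ into two pieces depending on whether it exceeds the threshold function $\theta(q):=q^{-1/(2m+1)}(\log q)^{2/(2m+1)}$ appearing in condition~\eqref{condpsi}. The extra lower bound $s>d-n/(2(m+1))$ enters precisely in order to control the piece of $\psi$ that falls below this threshold.

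Partition $\mathcal{N}=\{2^t:t\in\N\}$ as $\mathcal{N}_1\sqcup\mathcal{N}_2$, where $\mathcal{N}_1:=\{q\in\mathcal{N}:\psi(q)\ge\theta(q)\}$, and set $\psi_i:=\psi\cdot\chi_{\mathcal{N}_i}$ for $i=1,2$. Any point in $\cS_n(\psi,\bm\theta)$ possesses infinitely many approximants $q\in\mathcal{N}$, so either $\mathcal{N}_1$ or $\mathcal{N}_2$ must supply infinitely many of them; hence
\[
\cS_n(\psi,\bm\theta)\;\subset\;\cS_n(\psi_1,\bm\theta)\cup\cS_n(\psi_2,\bm\theta).
\]
On the support of $\psi_1$ condition~\eqref{condpsi} holds by construction, and the tail sum for $\psi_1$ is dominated by that for $\psi$; Theorem~\ref{t:genconv} applied to $\psi_1$ therefore gives $\hs(\cM_{\mathbf f}\cap\cS_n(\psi_1,\bm\theta))=0$.

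For the sub-threshold piece, since $\psi_2\le\theta$ on its support one has $\cS_n(\psi_2,\bm\theta)\subset\cS_n(\widetilde\psi,\bm\theta)$, where $\widetilde\psi(q):=\theta(q)$ for $q\in\mathcal{N}_2$ and $\widetilde\psi(q):=0$ otherwise. This dominating function satisfies~\eqref{condpsi} with equality, and it only remains to verify the convergence sum. Setting $q=2^t$ one finds
\[
\left(\frac{\widetilde\psi(2^t)}{2^t}\right)^{s+m}2^{tn}\;\asymp\;t^{2(s+m)/(2m+1)}\;2^{\,t\bigl(n-(2m+2)(s+m)/(2m+1)\bigr)}.
\]
The exponential factor decays iff $s+m>n(2m+1)/(2(m+1))$, which rearranges to $s>d-n/(2(m+1))$; this is exactly the hypothesis of the corollary. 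The series thus converges, Theorem~\ref{t:genconv} yields $\hs(\cM_{\mathbf f}\cap\cS_n(\widetilde\psi,\bm\theta))=0$, and combining the two pieces concludes the argument.

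\textbf{Main obstacle.} The only genuine content is the algebraic identification of the lower bound $s>d-n/(2(m+1))$ with the convergence condition for the universal threshold sum arising from~\eqref{condpsi}; once this is observed, no new counting input is needed beyond Theorem~\ref{t:genconv}, and the remainder is a clean splitting argument.
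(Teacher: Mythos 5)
Your proof is correct and follows essentially the same approach as the paper: you dominate $\psi$ by the threshold function where it falls below, and exploit the hypothesis $s>d-n/(2(m+1))$ to make the lacunary sum over the threshold converge. The paper does this in one step via the auxiliary function $\tilde\psi(q)=\max\{\psi(q),Cq^{-1/(2m+1)}(\log q)^{2/(2m+1)}\}$ (supported on $\mathcal N$) rather than your two-piece splitting, but the underlying mechanism — monotonicity of $\cS_n$ in $\psi$ together with the exponent computation you carry out explicitly — is identical.
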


\begin{proof}
Consider the auxiliary function
$$
\tilde\psi(q)=\max\{\psi(q),Cq^{-1/(2m+1)}(\log q)^{2/(2m+1)}\}\,,
$$
where $C>0$ is a sufficiently large constant. Then as is easily verified using the convergence sum condition of Corollary~\ref{c:svlacunary}
$$
\sum_{t=1}^{\infty}  \left( 2^{-t} \,  \tilde\psi(2^t)\right)^{s+m}  2^{t n}  <\infty
$$
and therefore, by Theorem~\ref{t:genconv}, we have that $\hs \left( \cM_{\mathbf f}\cap \cS_n(\tilde\psi,\bm\theta) \right) = 0$. Trivially, we have that
$\cS_n(\psi,\bm\theta) \subset \cS_n(\tilde\psi,\bm\theta)$ and then the required statement follows on using the monotonicity of $\hs$.
\end{proof}

Note that \eqref{ndg} is always satisfied  if $\dim ( \left\{ \boldsymbol{\alpha} \in \mathcal U:  \  \text{the l.h.s. of \eqref{e:one1}}    = 0 \right\}) \leq d -  \frac{n}{2(m+1)}$.

Let us now consider Theorem \ref{t:genconv} under the assumption  that $\psi $ is monotonic.   Then, without loss of generality, we can assume that  $\mathcal N = \N$  since otherwise $\psi(q) = 0$ for all   sufficiently large $q$  and so  $\cS_n(\psi,\bm\theta)$ is the empty set and there is nothing to prove.  Furthermore,  we can assume that $\psi(q) \ll q^{-1/n}$ for all $q\in\N$ since otherwise the $s$-volume sum appearing in the theorem is divergent for $s \le d$. This is in line with the fact that if $\psi(q) \ge  q^{-1/n}$  for all   sufficiently large   $q$, then  by Dirichlet's theorem we have that
$
 \cM_{\mathbf f}\cap \cS_n(\psi)  =  \cM_{\f}
$
and so $\hs \left( \cM_{\mathbf f}\cap \cS_n(\psi) \right)  >0 $ for $s \le d$.  The upshot is that within the context of Theorem \ref{t:genconv},  for monotonic $\psi$  we can assume that
$$
q^{-1/(2m+1)}(\log q)^{2/(2m+1)} \ll \psi (q)  < q^{-1/n} \, .
$$
This forces $d > (n+1)/2 $. % which in turn guarantees that $s_0 < d $ in the following statement.

%Let us now consider Theorem \ref{t:genconv} under the assumption  that $\psi $ is monotonic.   Then, without loss of generality, we can assume that  $\mathcal N = \N$  since otherwise $\psi(q) = 0$ for all   sufficiently large $q$  and so  $\cS_n(\psi,\bm\theta)$ is the empty set and there is nothing to prove.  It follows that if $\psi(q) \ge  q^{-1/n}$  for all   sufficiently large   $q$, then  by Dirichlet's theorem we have that
%$
% \cM_{\mathbf f}\cap \cS_n(\psi,\bm\theta)  =  \cM
%$
%and so $\hs \left( \cM_{\mathbf f}\cap \cS_n(\psi,\bm\theta) \right)  >0 $ for $s \le d$.  Also,  it is easily seen that  the $s$-volume sum appearing in the theorem is divergent for $s \le d$.  The upshot is that within the context of Theorem \ref{t:genconv},  for monotonic $\psi$  the main substance   is when
%$$
%q^{-1/(2n+1)}(\log q)^{2/(2n+1)} \ll \psi (q)  < q^{-1/n} \, .
%$$
%This forces $d > (n+1)/2 $. % which in turn guarantees that $s_0 < d $ in the following statement.

\begin{cor}
\label{c:svmontonic}
 Let $\bm\theta \in \R^n$ and $\psi: \R^+ \to \R^+ $ be a  monotonic function such that
 $ \psi(r)\rightarrow0$ as $r\rightarrow\infty$.   Let
$$  d   > \textstyle{\frac{n+1}{2}} \,     \quad  \text{and}   \quad  s_0:=
 %\textstyle{\frac{(m+1) + d(2n+1)}{2(m+1)}}   =
 %\textstyle{\frac{2mn +n +1}{2(m+1)}}    =
 \textstyle{\frac{dm}{m+1}} + \textstyle{\frac{n+1}{2(m+1)}}< s \le d  \,  $$
and assume that $\mathbf f : \mathcal U \to  \R^m$   satisfies \eqref{ndg}. Then
 \begin{equation*}
\hs \left( \cM_{\mathbf f}\cap \cS_n(\psi,\bm\theta) \right) = 0   \qquad \text{whenever} \qquad  \sum_{q=1}^{\infty}  \left( {\frac{\psi(q)}{q}}  \right)^{s+m} q^n   \ < \ \infty \, .
\end{equation*}
 \end{cor}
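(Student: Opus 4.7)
The plan is to follow the same reduction used in the proof of Corollary~\ref{c:svlacunary}: replace $\psi$ by a larger auxiliary function that satisfies the lower bound condition \eqref{condpsi}, then invoke Theorem~\ref{t:genconv} and transfer the conclusion back to $\psi$ via monotonicity of $\hs$. By the preparatory remarks immediately preceding the statement of the corollary, we may assume $\mathcal N = \N$ and $\psi(q) \ll q^{-1/n}$ for all sufficiently large $q$; the hypothesis $d > (n+1)/2$ is what makes the interval $(s_0, d]$ non-empty.

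Define
$$
\tilde\psi(q)=\max\{\psi(q),\,C q^{-1/(2m+1)}(\log q)^{2/(2m+1)}\}
$$
with $C>0$ a sufficiently large constant. Both $\psi(q)$ and $q^{-1/(2m+1)}(\log q)^{2/(2m+1)}$ tend to zero, so $\tilde\psi(q) \to 0$, and by construction $\tilde\psi$ satisfies \eqref{condpsi} with $\mathcal N = \N$. The key step is to verify that the convergence sum for $\psi$ forces the corresponding sum for $\tilde\psi$ to converge as well. Using $(\max\{a,b\})^{s+m} \le a^{s+m}+b^{s+m}$ gives
$$
\sum_{q=1}^{\infty}\left(\frac{\tilde\psi(q)}{q}\right)^{s+m}q^n \;\le\; \sum_{q=1}^{\infty}\left(\frac{\psi(q)}{q}\right)^{s+m}q^n \;+\; C^{s+m}\sum_{q=2}^{\infty} q^{\,n-(s+m)\left(1+\frac{1}{2m+1}\right)}(\log q)^{\frac{2(s+m)}{2m+1}}.
$$
The first sum on the right converges by hypothesis. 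The second converges if and only if $(s+m)\bigl(1+1/(2m+1)\bigr)>n+1$, which rearranges to $s+m > (2m+1)(n+1)/(2(m+1))$; subtracting $m$ and simplifying using $n=m+d$ gives $s > \frac{dm}{m+1} + \frac{n+1}{2(m+1)} = s_0$, which is precisely the standing assumption on $s$.

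With the convergence sum in hand for $\tilde\psi$, Theorem~\ref{t:genconv} applies and yields $\hs(\cM_{\mathbf f}\cap \cS_n(\tilde\psi,\bm\theta))=0$. Since $\psi(q) \le \tilde\psi(q)$ for every $q$, we trivially have $\cS_n(\psi,\bm\theta)\subset \cS_n(\tilde\psi,\bm\theta)$, and monotonicity of $\hs$ delivers the conclusion. The only quantitative step is the algebraic check in the central display; the threshold exponent $1/(2m+1)$ in \eqref{condpsi} has been calibrated so that the critical value of $s$ arising from that computation coincides precisely with the quantity $s_0$ named in the statement, so no further obstacle is anticipated.
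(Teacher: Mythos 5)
Your proof is correct and matches the paper's approach: the paper explicitly states that Corollary~\ref{c:svmontonic} is proved "similar to that of Corollary~\ref{c:svlacunary}", i.e.\ by introducing $\tilde\psi(q)=\max\{\psi(q),Cq^{-1/(2m+1)}(\log q)^{2/(2m+1)}\}$, checking that the convergence hypothesis carries over to $\tilde\psi$, applying Theorem~\ref{t:genconv}, and using $\cS_n(\psi,\bm\theta)\subset\cS_n(\tilde\psi,\bm\theta)$ together with monotonicity of $\hs$. You have simply filled in the algebra showing that $\sum_q q^{\,n-(s+m)(1+1/(2m+1))}(\log q)^{2(s+m)/(2m+1)}$ converges exactly when $s>s_0$, which the paper leaves implicit.
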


 \noindent The proof is similar to that of Corollary~\ref{c:svlacunary}. Note that \eqref{ndg}  is always satisfied  if $$\dim ( \left\{ \boldsymbol{\alpha} \in \mathcal U:  \  {\rm l.h.s. \ of \ } (\ref{e:one1})    = 0 \right\}) \leq s_0.$$ Also note that  the condition $d > (n+1)/2 $  guarantees that $s_0 < d $.  However, it does mean that the  corollary is not applicable  when $n= 3$ or $n=2$. The fact that is not applicable when $n=2$ is not a concern - see Remark~2 immediately  below.

\noindent\emph{Remark}~2.
It is conjectured  that the conclusion of Corollary \ref{c:svmontonic} is valid for any non-degenerate manifold  (i.e. $d \geq 1 $) and  $  \textstyle{\frac{dm}{(m+1)}} < s \le d$ -- see for example \cite[\S8]{Beresnevich-SDA1}.  For planar curves ($d=m=1$), this is known to be true \cite{Beresnevich-Vaughan-Velani-11:MR2777039,Vaughan-Velani-06:MR2242634}. To the best of our knowledge, beyond planar curves, the corollary represents the first significant contribution in favour of the  conjecture.

\noindent\emph{Remark}~3.
Corollary \ref{c:svmontonic} together with  the definition of Hausdorff dimension implies that if $d > (n+1)/2 $, then for $1/n \leq \tau \leq  1/(2m+1)$
$$
\dim \left( \cM_{\mathbf f}\cap \cS_n(\tau,\bm\theta) \right)   \, \leq \, \textstyle{\frac{n+1}{\tau +1}}   - m \, .
$$

\noindent\emph{Remark}~4.
Corollary \ref{c:svmontonic} with $s=d$ implies that if $d > (n+1)/2 $ then
\begin{equation} \label{lebcon}
| \cM_{\mathbf f}\cap \cS_n(\psi,\bm\theta) |_{\cM_{\mathbf f}}  = 0   \qquad \text{whenever} \qquad  \sum_{q=1}^{\infty}  \psi(q)^n \ < \ \infty \, ,
\end{equation}
where $ |  \ . \  |_{\cM_{\mathbf f}} $   is the induced $d$-dimensional Lebesgue measure on $\cM_{\mathbf f}$.  In other words, it proves that  the $2$-non-degenerate submanifold $\cM_{\mathbf f}$ of $\R^n$  with dimension strictly greater than  $(n+1)/2$ is of  Khintchine-type for convergence -- see \cite{Beresnevich-Dickinson-Velani-07:MR2373145}.  Apart from the  planar curve results referred to in Remark~2,  the current state of the convergent Khintchine theory is somewhat ad-hoc. Either a specific
manifold or a special class of manifolds satisfying various constraints is studied. For example it has been shown that (i) manifolds which are a topological product of at least four non--degenerate planar curves are Khintchine type for convergence \cite{Ber73} as are (ii) the so called 2--convex manifolds of dimension
$d \geq 2$ \cite{DRV91} and (iii) straight lines through the origin satisfying a natural Diophantine condition \cite{Kov00}.

\noindent\emph{Remark}~5.
In view of the conjecture mentioned above in Remark~2,   we expect  (\ref{lebcon}) to remain  valid for any non-degenerate manifold  without any restriction on its dimension. Note that it is relatively straightforward to establish  that this is indeed the case   for almost all $\bm\theta$. Moreover, we do not need to assume that $\psi$ is monotonic or even that $\cM_{\mathbf f}$  is non-degenerate.  In other words,  for any $C^1$ submanifold\footnote{By a $C^1$ submanifold we mean an immersed manifold into $\R^n$ by a $C^1$ map, that is the image of a $C^1$ map $\vv f:\cU\to\R^n$.}  $\cM_{\mathbf f}$ of $\R^n$ and  $\psi: \R^+ \to \R^+ $,  we have that  (\ref{lebcon}) is valid for almost all $\bm\theta \in \R^n$.  This is an immediate  consequence of the following even more general `doubly metric' result.

\begin{thp}
Let $\vv f:\cU\to\R^n$ be any continuous map. Given  $\psi: \R^+ \to \R^+ $, let
$$\cD(\vv f,\psi) := \{ (\vv x,\bm\theta) \in \cU\times \R^n:     \|q\vv f(\vv x)- \bm\theta\|  <\psi(q) \text{ for i.m. } q\in\N \}  $$
and let $|  \ . \  |_{d+n } $ denote $(d+n)$-dimensional Lebesgue measure.
Then
\begin{equation} \label{dob}
| \cD(\vv f,\psi) |_{d+n}   = 0   \qquad \text{whenever} \qquad  \sum_{q=1}^{\infty}  \psi(q)^n \ < \ \infty \, .
\end{equation}
\end{thp}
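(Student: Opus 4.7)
The plan is to deduce this from Fubini's theorem together with a one-line Borel--Cantelli argument on each fibre.

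First I would verify measurability. Write
\[
\cD(\vv f,\psi) \; = \; \bigcap_{N=1}^{\infty}\bigcup_{q=N}^{\infty} E_q,
\qquad
E_q := \{(\vv x,\bm\theta)\in\cU\times\R^n : \|q\vv f(\vv x)-\bm\theta\|<\psi(q)\}.
\]
Since $\vv f$ is continuous and the distance--to--nearest--integer function is continuous in both variables, each $E_q$ is an open subset of $\cU\times\R^n$, and hence $\cD(\vv f,\psi)$ is a Borel set, in particular Lebesgue measurable.

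Next I would fix $\vv x\in\cU$ and compute (or rather, estimate) the measure of the fibre
\[
\cD(\vv f,\psi)_{\vv x} \; := \; \{\bm\theta\in\R^n : \|q\vv f(\vv x)-\bm\theta\|<\psi(q)\ \text{for i.m. } q\in\N\}.
\]
This set is invariant under translation by $\Z^n$, so it suffices to show it has measure zero in $[0,1)^n$. For each $q\in\N$ with $\psi(q)<\tfrac12$, the set of $\bm\theta\in[0,1)^n$ satisfying $\|q\vv f(\vv x)-\bm\theta\|<\psi(q)$ is a union of axis-aligned boxes of total $n$-dimensional Lebesgue measure at most $(2\psi(q))^n$. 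Since $\sum_q\psi(q)^n<\infty$, the Borel--Cantelli lemma yields
\[
\big|\cD(\vv f,\psi)_{\vv x}\cap[0,1)^n\big|_n = 0,
\]
and so $|\cD(\vv f,\psi)_{\vv x}|_n=0$ by periodicity.

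Finally I would apply Fubini--Tonelli: since $\cD(\vv f,\psi)$ is Borel measurable,
\[
|\cD(\vv f,\psi)|_{d+n} \; = \; \int_{\cU}\big|\cD(\vv f,\psi)_{\vv x}\big|_n\, d\vv x \; = \; 0,
\]
which gives \eqref{dob}. There is no substantive obstacle here; the only thing that needs a moment's care is noting that continuity of $\vv f$ is used merely to make $\cD(\vv f,\psi)$ Borel so that Fubini applies, while the Diophantine content reduces on each fibre to the trivial half of Borel--Cantelli. This also explains why no non-degeneracy hypothesis, no monotonicity on $\psi$ and indeed no smoothness beyond continuity of $\vv f$ is needed.
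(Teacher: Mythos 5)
Your proof is correct and uses the same two ingredients as the paper's --- Fubini--Tonelli and the convergence half of Borel--Cantelli --- differing only in the order of operations: you apply Borel--Cantelli on each fibre $\{\vv x\}\times[0,1)^n$ and then integrate over $\vv x$, whereas the paper integrates first (computing $|D_q(\vv f,\psi)|_{d+n}=(2\psi(q))^n$ directly via Fubini) and applies Borel--Cantelli once on the product space $\cU\times[0,1]^n$. You also record the Borel measurability of $\cD(\vv f,\psi)$ explicitly, which the paper leaves implicit; this is a minor but welcome addition since it is what licenses the appeal to Fubini--Tonelli.
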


\noindent {\em Proof.}  The proposition is pretty much a direct  consequence of Fubini's theorem.  Without loss of generality, we can assume that $\bm\theta$ is restricted to the unit cube $[0,1]^n$.   For $q \in \N$,  let
$$
\delta_q(\vv x) :=  \left\{\begin{array}{ll}
1  & \text{if} \;\;\;
\|\vv x  \| < \psi(q) \\[1ex]
0 &
\textstyle\text{otherwise }  \;
\end{array}\right.
$$
and
$$
D_q(\vv f,\psi) := \{ (\vv x,\bm\theta) \in \cU\times [0,1]^n:    \delta_q(q\vv f(\vv x)- \bm\theta) =1  \}  \, .
$$
Notice that
$$
\cD(\vv f,\psi) =  \limsup_{q \to \infty}  D_q(\vv f,\psi) \, ,
$$
and that by Fubini's theorem
\begin{eqnarray*}
| D_q(\vv f,\psi)  |_{d+n}  & = & \int_{\cU}  \Big(  \int_{[0,1]^n} \delta_q(q\vv f(\vv x)- \bm\theta) d\bm\theta  \Big)  \, d\vv x \\[2ex]
& = &  | \cU |_{d} \; (2 \, \psi(q))^n = (2 \, \psi(q))^n  \, .
\end{eqnarray*}

Hence
$$
\sum_{q=1}^{\infty} | D_q(\vv f,\psi)  |_{d+n}  \ \  \asymp   \ \ \sum_{q=1}^{\infty}  \psi(q)^n \ < \ \infty \, ,
$$
and the Borel-Cantelli lemma implies the desired measure zero statement.  \hfill $\Box$

\subsection{Restricting to hypersurfaces}
As already mentioned,   the condition $d > (n+1)/2 $  means that Corollary \ref{c:svmontonic} is not applicable when $n=3$. We now attempt to rectify this.  In the case $m=1$, so that the manifold $\cM_{\mathbf f}  $ associated with $\mathbf f$  is a hypersurface,  we can do better than Theorem~\ref{t:svthm1} if we  assume that $\cM_{\mathbf f} $ is  genuinely  curved.  More precisely,  in place of (\ref{e:one1}) we suppose that there is an $\eta>0$ such that for all $\bm\alpha\in\cU$
\begin{equation}
\label{e:one3} \left|
\det\left(
\frac{\partial^2 f}{\partial\alpha_i\partial\alpha_j}(\bm\alpha)
\right)_{\substack{
1\le i\le d\\
1\le j\le d
}}
\right|\ge\eta
\end{equation}
 where for brevity we have written $f$ for $f_1$.  It is not too difficult to  see that this  condition  imposed on the determinant (Hessian) is valid for spheres but not for cylinders with a flat base. We will refer to the hypersurface $\cM_{\mathbf f} $ with $\mathbf f$  satisfying (\ref{e:one3}) as \emph{genuinely curved}. Throughout the rest of this section we will assume that $m=1$ and so $d=n-1$.

\begin{thm}
\label{t:svthm3} Suppose that  $\vv f  : \cU  \to \R $  satisfies  \eqref{e:one3} and $\bm\theta \in \R^n$.  Suppose that  $0 < \psi(q) \leq 1/2  $.      Then
\begin{equation}
\label{sv22} A(q,\psi,\bm\theta)     \ \ll \   \psi(q) \, q^d  \, + \,   (q \, \psi(q))^{-d/2} q^d   \,   \max \{1, (\log (q \, \psi(q)))^d \}   \;
\end{equation}
where the implied constant is independent of $q$, $\bm\theta$ and $\psi$ but may depend on $\mathbf f$.
\end{thm}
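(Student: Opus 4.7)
The proposed proof follows the same Fourier-analytic framework as the proof of Theorem~\ref{t:svthm1}, but it exploits the strengthened non-degeneracy assumption \eqref{e:one3} in order to replace the one-dimensional van der Corput estimates available under \eqref{e:one1} by full $d$-dimensional stationary phase. First I would majorise the indicator of the interval $\{|t|<\psi(q)\}$ by a Selberg/Beurling trigonometric polynomial $\phi_H(t)=\sum_{|h|\le H}c_h\,e(ht)$ of degree $H\asymp 1/\psi(q)$, whose coefficients satisfy $|c_h|\ll\min\{\psi(q),|h|^{-1}\}$ and $c_0\ll\psi(q)$. Recalling that $m=1$ and writing $f$ for $f_1$, this yields
$$
A(q,\psi,\bm\theta)\,\ll\,\sum_{|h|\le H}|c_h|\,|S_h|,\qquad S_h:=\sum_{\mathbf a\in\Z^d}e\!\left(h q f\!\left(\tfrac{\mathbf a+\bm\lambda}{q}\right)\right)w(\mathbf a),
$$
where $w$ is a smooth cutoff adapted to the constraint $(\mathbf a+\bm\lambda)/q\in\cU$. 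The contribution from $h=0$ is $c_0\cdot\#\{\mathbf a:(\mathbf a+\bm\lambda)/q\in\cU\}\ll\psi(q)\,q^d$, giving the main term of \eqref{sv22}.

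For $h\neq 0$, I would apply Poisson summation to $S_h$ and rescale by $\bm\alpha=(\mathbf a+\bm\lambda)/q$, obtaining
$$
S_h\,=\,q^d\sum_{\mathbf k\in\Z^d}e(\mathbf k\cdot\bm\lambda)\int_{\R^d}e\!\left(q\,\Phi_{h,\mathbf k}(\bm\alpha)\right)\tilde w(\bm\alpha)\,d\bm\alpha,\qquad \Phi_{h,\mathbf k}(\bm\alpha):=hf(\bm\alpha)-\mathbf k\cdot\bm\alpha.
$$
The decisive new input is that \eqref{e:one3} gives $|\det\hes_{\bm\alpha}\Phi_{h,\mathbf k}|=|h|^d|\det\hes f|\ge\eta\,|h|^d$ everywhere on $\cU$, so the phase is non-degenerate in \emph{all} $d$ variables simultaneously. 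Consequently, $d$-dimensional stationary phase (H\"ormander–Stein) yields
$$
\left|\int_{\R^d}e(q\Phi_{h,\mathbf k})\,\tilde w\,d\bm\alpha\right|\,\ll\,(q|h|)^{-d/2}
$$
at the unique interior critical point $\bm\alpha_c$ determined by $\nabla f(\bm\alpha_c)=\mathbf k/h$, while for $\mathbf k$ without a critical point in a neighbourhood of $\cU$, repeated integration by parts produces arbitrary polynomial decay in $q|h|+|\mathbf k|$. Since $\nabla f(\cU)$ is bounded, only $\ll|h|^d$ values of $\mathbf k$ contribute through a critical point, so up to the logarithmic losses mentioned below, $|S_h|\,\ll\,q^{d/2}|h|^{d/2}(\log(q|h|))^{d}$.

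Assembling the two halves with $H\asymp 1/\psi(q)$,
$$
\sum_{0<|h|\le H}|c_h||S_h|\,\ll\,\psi(q)\,q^{d/2}\!\!\sum_{|h|\le 1/\psi(q)}\!\!|h|^{d/2}(\log(q|h|))^{d}\,\ll\,q^{d}(q\psi(q))^{-d/2}(\log(q\psi(q)))^{d},
$$
which is exactly the error term in \eqref{sv22}. The main obstacle in the whole programme is the precise control of the oscillatory integral when the critical point $\bm\alpha_c$ approaches $\partial\cU$ or drifts off it: the naive stationary phase bound degrades in a boundary layer, and one must carry out a dyadic decomposition of $\cU$ according to the distance to $\partial\cU$ (equivalently, according to the distance of $\mathbf k/h$ to $\partial(\nabla f(\cU))$), apply smooth truncations at each dyadic scale, and bound the boundary strips by integration by parts. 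Each of the $d$ coordinate directions in $\bm\alpha$ contributes a single logarithmic factor from this decomposition, which produces the $(\log(q\psi(q)))^{d}$ in \eqref{sv22} and is the quantitative feature that distinguishes Theorem~\ref{t:svthm3} from Theorem~\ref{t:svthm1}.
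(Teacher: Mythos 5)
Your approach via Poisson summation and $d$-dimensional stationary phase is a genuinely different route from the paper's. The paper adapts Sprind\v{z}uk's method: it partitions the lattice points $\mathbf a$ into boxes of side $r=\lfloor(\delta q\psi(q))^{1/2}\rfloor$, Taylor-expands $f$ to second order so that within each box the phase is \emph{linear} in the offset $\mathbf v$, majorises the indicator by a Fej\'er kernel, and then the exponential sum over $\mathbf v$ factors exactly as a $d$-fold product of geometric series, each bounded pointwise by $\min\bigl(r,\bigl\|h\,\tfrac{\partial f}{\partial\alpha_i}\bigr\|^{-1}\bigr)$. Condition \eqref{e:one3} is used only to justify the change of variables $\omega_i=\tfrac{\partial f}{\partial\alpha_i}$ after summing over boxes and passing to an integral; Fubini then gives a $d$-fold product of one-dimensional integrals, each contributing a $\max\{1,\log r\}$, and $r^{-d}\asymp(q\psi(q))^{-d/2}$ delivers the error term. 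This is elementary and free of boundary issues, because $\min(r,\|\cdot\|^{-1})$ is an exact bound rather than an asymptotic one.

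Your route instead feeds the curvature directly into a stationary-phase estimate. The identity $\det\hes_{\bm\alpha}\Phi_{h,\mathbf k}=h^d\det\hes f$, the $(q|h|)^{-d/2}$ decay at a nondegenerate critical point, the $\ll|h|^d$ count of stationary $\mathbf k$, and the Selberg coefficients $|c_h|\ll\psi(q)$ do correctly account for the power $q^d(q\psi(q))^{-d/2}$, so the conceptual skeleton is sound and explains the exponent $-d/2$. But there are two real gaps. First and most seriously, the boundary-layer analysis is only announced, not carried out, and that is where the whole argument lives or dies: the stationary-phase constant degrades when $\bm\alpha_c$ comes within $(q|h|)^{-1/2}$ of $\partial\cU$, and one must also control the nonstationary $\mathbf k$ just outside $h\nabla f(\cU)$, whose integrals decay only polynomially in their distance to the boundary; showing that the total excess is merely logarithmic is the substantive work. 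Second, if you rerun your own sum, the logarithmic loss comes out as $\bigl(\log(qH)\bigr)^d=\bigl(\log(q/\psi(q))\bigr)^d$, not $\bigl(\log(q\psi(q))\bigr)^d$ as your final display claims and as the theorem states; this is strictly weaker, though still sufficient for Corollary~\ref{c:svcor2}. The paper's block decomposition sidesteps both problems at the outset, which is precisely why it is the cleaner route here.
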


A simple consequence of this theorem is the following   analogue of Corollary \ref{c:svcor1}.

\begin{cor}\label{c:svcor2}
Suppose that $\mathbf f  : \mathcal U  \to \R $ satisfies  \eqref{e:one3} and $\bm\theta \in \R^n$. Suppose that
 $$  q^{-d/(2+d)}(\log q)^{2d/(2+d)}   \le \psi(q) \leq 1/2 \, . $$     Then
 for integers $q\ge2$ we have that
\begin{equation}
\label{sv33} A(q,\psi,\bm\theta)     \, \ll \,   \psi(q)  \, q^d  \; .
\end{equation}
\end{cor}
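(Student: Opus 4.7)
The plan is to deduce Corollary~\ref{c:svcor2} directly from Theorem~\ref{t:svthm3}, exactly as Corollary~\ref{c:svcor1} follows from Theorem~\ref{t:svthm1}. The right-hand side of \eqref{sv22} consists of the heuristic main term $\psi(q)q^d$ and a secondary term $(q\psi(q))^{-d/2}q^d \max\{1,(\log(q\psi(q)))^d\}$. The whole task is to verify that the hypothesis
$$
\psi(q) \ge q^{-d/(2+d)}(\log q)^{2d/(2+d)}
$$
is strong enough to absorb the secondary term into the main term.

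I would begin by cancelling the common factor $q^d$, which reduces the desired domination to showing
$$
\max\{1,(\log(q\psi(q)))^d\} \ll q^{d/2}\,\psi(q)^{(d+2)/2}.
$$
Raising the hypothesis on $\psi(q)$ to the power $(d+2)/2$ gives $q^{d/2}\psi(q)^{(d+2)/2}\ge (\log q)^d$. Since $\psi(q)\le 1/2$ forces $q\psi(q)\le q$ and hence $\log(q\psi(q))\le\log q$, the right-hand side dominates $(\log(q\psi(q)))^d$. For $q\ge 2$ the same chain yields $q^{d/2}\psi(q)^{(d+2)/2}\ge (\log 2)^d > 0$, which absorbs the constant $1$ inside the maximum at the cost of an absolute multiplicative constant. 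Plugging back into \eqref{sv22} produces \eqref{sv33} with an implied constant depending only on $d$ and on $\mathbf f$ (through Theorem~\ref{t:svthm3}).

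There is no genuine obstacle here: the deduction is a one-line algebraic manipulation once Theorem~\ref{t:svthm3} is granted, and it mirrors verbatim the strategy behind Corollary~\ref{c:svcor1}. The only subtlety worth flagging is that the logarithm appearing in \eqref{sv22} has argument $q\psi(q)$ rather than $q$, which is precisely why the hypothesis can be stated in terms of $\log q$: the assumption $\psi(q)\le 1/2$ provides the harmless one-sided comparison $\log(q\psi(q))\le\log q$ needed to pass from one to the other.
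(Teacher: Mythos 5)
Your deduction is correct and is exactly the intended (but unstated) "simple consequence" argument in the paper: cancel $q^d$, raise the lower bound on $\psi(q)$ to the power $(d+2)/2$ to get $q^{d/2}\psi(q)^{(d+2)/2}\ge(\log q)^d$, and use $\psi(q)\le 1/2$ (together with the implicit lower bound $q\psi(q)\gg 1$ for $q\ge 2$) to dominate both branches of the maximum. The paper gives no explicit proof, so there is nothing to compare beyond confirming your route matches the one the authors clearly had in mind, in parallel with Corollary~\ref{c:svcor1}.
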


\noindent It is easily seen that  Theorem~\ref{t:svthm1}  with $m=1$  and Theorem \ref{t:svthm3} coincide when $n = 2$ but for $ n  \geq 3 $ the second term on the r.h.s. in (\ref{sv22})  is smaller than the corresponding term in (\ref{sv2}).  In particular,
$$
q^{-d/(2+d)}(\log q)^{2d/(2+d)}   < q^{-1/3}(\log q)^{2/3}
$$
and so Corollary \ref{c:svcor2} is stronger than Corollary \ref{c:svcor1} for $f$ satisfying  (\ref{e:one3}).  Corollary \ref{c:svcor2} enables us to obtain the  analogue of Theorem~\ref{t:genconv} for genuinely curved hypersurfaces in which the condition that  $\psi(q)\gg q^{-1/(2m+1)}(\log q)^{2/(2m+1)}$ for $q \in \mathcal N $ is replaced by $\psi(q)\gg q^{-d/(2+d)}(\log q)^{2d/(2+d)}$ for $q \in \mathcal N $.  In turn for monotonic functions we have the following statement.  It represents a strengthening of Corollary \ref{c:svmontonic} in the case of   genuinely curved hypersurfaces and  is valid when $n=3$.

\begin{cor}\label{c:svhypermontonic}
Suppose that   $\mathbf f  : \mathcal U  \to \R $ and $\bm\theta \in \R^n$. Let $\psi: \R^+ \to \R^+ $ be a  monotonic function such that
 $ \psi(r)\rightarrow0$ as $r\rightarrow\infty$.    Let
$$ n \geq 3  \,     \qquad  and   \qquad  \textstyle{\frac{n-1}{2}} + \textstyle{\frac{n+1}{2n}}< s \le n-1  \,  $$
and assume that
\begin{equation*}
 \hs \big( \big\{ \bm\alpha \in \mathcal U:  \text{the l.h.s. of \eqref{e:one3}}    = 0 \big\}\big)  = 0.
\end{equation*}
  Then
 \begin{equation*}
\hs \left( \cM_{\mathbf f}\cap \cS_n(\psi,\bm\theta) \right) = 0   \qquad\text{whenever}\qquad  \sum_{q=1}^{\infty}  \left( {\frac{\psi(q)}{q}}  \right)^{s+1} q^n   \ < \ \infty \, .
\end{equation*}
\end{cor}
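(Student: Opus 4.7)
I would mimic the proof of Corollary~\ref{c:svlacunary}, with Corollary~\ref{c:svcor2} playing the role of Corollary~\ref{c:svcor1}. The first step is to record the hypersurface analogue of Theorem~\ref{t:genconv} alluded to in the paragraph preceding the statement of Corollary~\ref{c:svhypermontonic}: assuming $\mathbf f:\cU\to\R$ satisfies \eqref{e:one3} off a set of $\hs$-measure zero, and that $\psi(q)\ge q^{-d/(2+d)}(\log q)^{2d/(2+d)}$ for every $q\in\mathcal N$, one has
$$
\hs\bigl(\cM_{\mathbf f}\cap\cS_n(\psi,\bm\theta)\bigr)=0 \qquad\text{whenever}\qquad \sum_{q=1}^{\infty}\Bigl(\tfrac{\psi(q)}{q}\Bigr)^{s+1}q^n<\infty.
$$
This is the standard Hausdorff--Cantelli covering argument that underlies Theorem~\ref{t:genconv}: after localising to the (full-$\hs$-measure) subset of $\cU$ on which \eqref{e:one3} holds with a uniform $\eta$, the intersection $\cM_{\mathbf f}\cap\cS_n(\psi,\bm\theta)$ at level $q$ is covered by $\ll A(q,\psi,\bm\theta)$ balls of radius $\psi(q)/q$, giving an $s$-dimensional premeasure $\ll A(q,\psi,\bm\theta)(\psi(q)/q)^s$. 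Corollary~\ref{c:svcor2} applies (by the lower bound on $\psi$) and yields $A(q,\psi,\bm\theta)\ll\psi(q)q^d$, so that the premeasure at level $q$ is $\ll(\psi(q)/q)^{s+1}q^n$, using $n=d+1$.

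For the second step I would pass to the monotonic setting of the corollary via the auxiliary function device used in the proof of Corollary~\ref{c:svlacunary}. Put
$$
\tilde\psi(q):=\max\bigl\{\psi(q),\,Cq^{-d/(2+d)}(\log q)^{2d/(2+d)}\bigr\}
$$
with $C>0$ sufficiently large so that $\tilde\psi$ obeys the required lower bound on all of $\N$. Since $\cS_n(\psi,\bm\theta)\subseteq\cS_n(\tilde\psi,\bm\theta)$, it suffices to show $\hs(\cM_{\mathbf f}\cap\cS_n(\tilde\psi,\bm\theta))=0$, which by the analogue above is reduced to the convergence of $\sum_q(\tilde\psi(q)/q)^{s+1}q^n$. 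Splitting according to whether $\tilde\psi(q)$ equals $\psi(q)$ or the auxiliary term, the first piece is majorised by the convergent hypothesis sum, while the second, up to a power of $\log q$, is of order
$$
\sum_{q\ge 2} q^{\,n-(s+1)(1+d/(d+2))}(\log q)^{2d(s+1)/(d+2)} \;=\; \sum_{q\ge 2} q^{\,n-2(s+1)(d+1)/(d+2)}(\log q)^{2d(s+1)/(d+2)}.
$$
With $d=n-1$ one has $d+1=n$ and $d+2=n+1$, so the exponent of $q$ is $n-2n(s+1)/(n+1)$, which is strictly less than $-1$ precisely when $s>(n^2+1)/(2n)=\tfrac{n-1}{2}+\tfrac{n+1}{2n}=s_0$. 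This is exactly the lower bound on $s$ in the hypothesis, and so the auxiliary sum converges.

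The substantive analytic input is already carried by Corollary~\ref{c:svcor2}; beyond this point the argument is a routine combination of Hausdorff--Cantelli with the auxiliary-function trick, and I do not foresee any serious obstruction. The one calculation requiring care is the threshold computation identifying $s_0=(n^2+1)/(2n)$ as the precise cut-off at which the auxiliary part of the sum converges, since this pins down the exact numerical form of the condition $s>s_0$ in the statement, and in particular explains why the corollary is applicable already for $n=3$ (as opposed to Corollary~\ref{c:svmontonic}, which requires $n\ge 4$).
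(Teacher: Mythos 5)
Your proof is correct and follows essentially the same route the paper intends: the auxiliary-function device from the proof of Corollary~\ref{c:svlacunary}, applied to the hypersurface analogue of Theorem~\ref{t:genconv} (with Corollary~\ref{c:svcor2} in place of Corollary~\ref{c:svcor1}). The threshold computation identifying $s_0=\tfrac{n^2+1}{2n}=\tfrac{n-1}{2}+\tfrac{n+1}{2n}$ as the convergence cutoff for the auxiliary part of the sum is exactly right, and your observation about why the result is available already for $n=3$ (whereas Corollary~\ref{c:svmontonic} needs $d>(n+1)/2$, i.e.\ $n\ge4$) correctly explains the point of the corollary.
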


The conjectured lower bound for $s$\/ above is $(n-1)/2$ -- see Remark~2 preceding the statement of  Corollary \ref{c:svmontonic}. The proof of the above corollary is similar to that of Corollary~\ref{c:svlacunary}.

\subsection{Further remarks and other developments}

The upper bound results of \S\ref{counting section} for the counting function $A(q,\psi,\bm\theta)$  are at the heart of establishing the convergence results of \S\ref{KTresults}.  We emphasize that $A(q,\psi,\bm\theta)$  is defined for a fixed $q$ and that Theorem  \ref{t:svthm1}  provides an upper bound for this function for any $q$ sufficiently large.  It is this fact,  that enables us to obtain  convergent results such as   Theorem \ref{t:genconv} without assuming that $\psi$ is monotonic. While statements without monotonicity are desirable, considering counting functions for a fixed $q$ does prevent us from taking advantage of any potential averaging over $q$.  More precisely, for $Q > 1$ consider the counting function
\begin{align}
N(Q,\psi,\bm\theta)  \; &  :=   \;  \#  \left\{(q, \mathbf a, \mathbf b)  \in  \N \times  \Z^d \times \Z^m    :
  \begin{array}{l}
  Q < q \leq 2Q, \; \alq \in  \mathcal U \, , \\[1ex]
       |q \mathbf f\big(\alq\big) - \bm\gamma - {\mathbf b} | < \psi(q)
                           \end{array}
\right\} \nonumber  \\[2ex]
\,~~&  =    \sum_{ Q < q \leq 2Q }      A(q,\psi,\bm\theta)  \label{sv99}  \, .
\end{align}

\noindent If $\psi$ is monotonic, then $\psi(q)  \le \psi(Q) $ for $ Q < q \leq 2Q $ and the obvious heuristic `volume' argument leads us to the following estimate:
\begin{equation} \label{nQ}
N(Q,\psi,\bm\theta)   \ \ll \  \psi(Q)^m \, Q^{d+1}  \, .
\end{equation}
Clearly, the upper bound \eqref{sv3} for $A(q,\psi,\bm\theta)$ as obtained in Corollary~\ref{c:svcor1} implies \eqref{nQ}.    The converse is unlikely to be true.  However, for monotonic $\psi$ establishing  \eqref{nQ} suffices to prove convergence results such as  Corollary \ref{c:svmontonic}. Indeed, the  fact that we have a complete convergence theory for planar curves (see Remark~2 in \S\ref{KTresults}) relies on the fact that we are able to establish  \eqref{nQ} with $m=1=d$.   Note that the counting result obtained in this paper for  $A(q,\psi,\bm\theta)$ is not strong enough to imply any sort of convergent Khintchine type result for planar curves with $\psi$ monotonic. Furthermore, it is worth pointing out that averaging over $q$ when considering $N(Q,\psi,\bm\theta)$ also has the potential to weaken the lower bound condition \eqref{condpsi} on $\psi$ appearing in Theorem~\ref{t:genconv}. This in turn would increase the range of $s$ within Corollaries~\ref{c:svmontonic} and \ref{c:svhypermontonic}.

Regarding lower bounds for the counting function  $N(Q,\psi,\bm\theta)$, if  $\psi$ is monotonic, then
$\psi(q)  \ge \psi(Q) $ for $ \tfrac12Q < q~\leq~Q $ and the  heuristic `volume' argument leads us to the following estimate:
\begin{equation}\label{lb}
N(\tfrac12Q,\psi,\bm\theta)   \ \gg \  \psi(Q)^m \, Q^{d+1}  \, .
\end{equation}
In the homogeneous case (i.e. when $\bm\theta = \bm0$), the lower bound  given by  \eqref{lb} is established  in~\cite{Beresnevich-SDA1} for  any analytic non-degenerate manifold $\cM$  embedded in $\R^n$ and  $\psi$ satisfying  $\lim_{q\to\infty}q\psi(q)^m=\infty$. When $\cM$ is  a curve,  the condition on $\psi$ can be weakened to  $\lim_{q\to\infty}q\psi(q)^{(2n-1)/3}=\infty$.   Moreover, it is shown in~\cite{Beresnevich-SDA1} that the rational points $ {\mathbf a}  /q$ associated with $N(\tfrac12Q,\psi,\bm0) $ are `ubiquitously' distributed  for  analytic non-degenerate manifolds.   This together with the lower bound estimate is very much at the heart of the  divergent Khintchine type results  obtained  in~\cite{Beresnevich-SDA1}  for analytic non-degenerate manifolds.
In a forthcoming  paper \cite{DIV},  we establish the lower bound estimate \eqref{lb} and show that shifted rational points $ \alq$ associated with $N(\tfrac12Q,\psi,\bm\theta) $  are `ubiquitously' distributed for any $C^{n+1}$ non-degenerate curve in $\R^n$ and arbitrary $\bm\theta$. As a consequence, we obtain a divergent Khintchine type theorem for Hausdorff measures. More specifically,
let\/ $\f=(f_1,\dots,f_{n-1}):[0,1]\to\R^{n-1}$ be a $C^{n+1}$ function  such that for almost all $\alpha\in[0,1]  $
\begin{equation}  \label{ohyes} \det\Big(f_j^{(i+1)}(\alpha)\Big)_{1\le i,j\le n-1}   \ \neq  \ 0
   \, . \end{equation}
 Let $\tfrac12<s\le 1$,  $ \bm\theta  \in \R^n$  and $\psi:\R^+\to\R^+$ be a monotonic function such that $\psi(r)\to0$ as $r\to\infty$. It is established in \cite{DIV} that
 \begin{equation*}
\hs \big( \cM_{\mathbf f}\cap \cS_n(\psi,\bm\theta) \big) = \hs(\cM_\f)    \qquad \text{whenever} \qquad  \sum_{q=1}^{\infty}  \left( {\frac{\psi(q)}{q}}  \right)^{s+n-1} q^n   \ = \ \infty \, .
\end{equation*}

\noindent In view of the conditions imposed on $\f$ above,   the associated  manifold  $\cM_{\mathbf f}$  is by definition  a $C^{n+1}$ non-degenerate curve in $\R^n$. When $ s$ is strictly less than one,  non-degeneracy can be replaced by the condition that  \eqref{ohyes} is satisfied for at least one point $\alpha\in[0,1] $.  In other words, all that is required is that there exists at least one point on the curve that is non-degenerate.  Using fibering techniques, it is also shown in  \cite{DIV} that the above statement for non-degenerate curve in $\R^n$ can be  readily extended to accommodate a large class of non-degenerate manifolds  beyond the analytic ones considered in~\cite{Beresnevich-SDA1}.

\section{Preliminaries to the proofs of Theorems \ref{t:svthm1} and \ref{t:svthm3}  \label{prelims} }

To establish Theorems \ref{t:svthm1} and \ref{t:svthm3} we adapt an argument of Sprind\v{z}uk \cite[Chp2~\S6]{Sprindzuk-1979-Metrical-theory}.  In our view the adaptation is non-trivial.

 Suppose  $0 < \psi(q) \leq  1/2  $ and recall that $\bm\theta=(\bm\lambda,\bm\gamma) \in \R^d \times \R^m$. Recall also that $A(q,\psi,\bm\theta)$ is given by \eqref{A}.
Given $\bm\lambda = ( \lambda_1, \ldots, \lambda_d)  \in \R^d$, let   $ \tilde{\bm\lambda} := ( \{\lambda_1\}, \ldots, \{\lambda_d\}) \in [0,1)^d$  denote the  fractional part of  $\bm\lambda$.  Then, it follows that
\begin{equation} \label{fin}
A(q,\psi,\bm\theta) =    \# \mathcal A(q,\psi,\bm\theta)
\end{equation}
where
$$\mathcal A(q,\psi,\bm\theta):=\{
\mathbf a \in \Z(q)  \; : \; \|q \, \mathbf f\big(\tfrac{\mathbf a + \tilde{\bm\lambda}}{q}\big) - \bm\gamma \| < \psi(q) \, \}  \,  $$
and
$$
\Z(q):=   \prod_{i=1}^{d}  \big( [0,q_i ]\cap\mathbb Z \big)    \quad {\rm and   \ } \quad  q_i = \left\{
\begin{array}{l}
  q   \quad \qquad   \! {\rm if  \ }  \tilde{\lambda_i} = 0  \\[1ex]
  q-1  \quad {\rm otherwise.}
\end{array}
\right.
$$

Let $\delta$ be a sufficiently small positive constant that will be determined later and depends on $ \mathbf f$.  Without loss of generality,  we can assume that
$$\delta q\psi(q)>1  \, . $$
Otherwise,   the  error term associated with \eqref{sv2} is, up to a multiplicative constant, larger than the trivial bound
$$
A(q,\psi,\bm\theta)    \leq (q+1)^d
$$
and there is nothing to prove.
Now define
\begin{equation}\label{r}
r:=\lfloor (\delta q\psi(q))^{1/2}\rfloor
\end{equation}
and for each $\mathbf a\in \Z(q)$ write
$$\mathbf a=r\mathbf u(\mathbf a)+\mathbf v(\mathbf a)$$
where $\mathbf u(\mathbf a)$, $\mathbf v(\mathbf a)$ satisfy $u_i(\mathbf a)=\lfloor a_i/r\rfloor$ and $0\le v_i(\mathbf a)<r$  ($ 1 \leq i  \leq  d $).  In particular
$$0\le u_i(\mathbf a)\le s$$
where
$$
s:=\lfloor q/r\rfloor.
$$
For $\mathbf u  \in \Z^d$, define
$$\mathcal A(q, \psi, \bm\theta, \mathbf u):=\{\mathbf a\in\mathcal A(q, \psi,\bm\theta ): \mathbf u(\mathbf a)= \mathbf u\}$$
and
$$A(q, \psi, \bm\theta, \mathbf u):=\# \mathcal A(q, \psi, \bm\theta, \mathbf u).$$

\noindent By the mean value theorem for second derivatives, when $\mathbf a\in\mathcal A(q, \psi, \bm\theta,\mathbf u)$,

$$f_j\big(\tfrac{\mathbf a + \tilde{\bm\lambda}}{q}\big) =
f_j\big(\tfrac{r\mathbf u  + \tilde{\bm\lambda}}{q}\big) +\sum_{i=1}^d \frac{v_i}q \frac{\partial f_j}{\partial\alpha_i}\big(\tfrac{r\mathbf u  + \tilde{\bm\lambda}}{q}\big) +O\left(
\sum_{i=1}^d\sum_{j=1}^d \frac{v_iv_j}{q^2}
\right)$$

\noindent for $\mathbf v=\mathbf v(\mathbf a)\in \mathcal R^d$ where $\mathcal R:=[0,r)\cap\mathbb Z$.  Here the error term is
$$<\ C_1r^2q^{-2} \ \le  \ C_1\delta\psi(q)q^{-1}$$
where $C_1$ depends at most on $d$ and the size of the second derivatives.  Now choose
 $$
\delta  =  1/C_1  \, .
 $$
Thus,  for $\mathbf a=r\mathbf u+\mathbf v$ with $\mathbf a \in \mathcal A(q, \psi, \bm\theta, \mathbf u)$ we have
\begin{equation}
\label{e:two2} \left\|
 qf_j\big(\tfrac{r\mathbf u  + \tilde{\bm\lambda}}{q}\big)+\sum_{i=1}^d v_i \frac{\partial f_j}{\partial\alpha_i}\big(\tfrac{r\mathbf u  + \tilde{\bm\lambda}}{q}\big)  - \gamma_j
 \right\|<2\psi(q)\quad(1\le j\le m).
\end{equation}

\noindent Therefore
$$A(q, \psi, \bm\theta, \mathbf u)\le B(q, \psi, \mathbf u)$$
where $B(q, \psi, \mathbf u):=\#\mathcal B(q, \psi, \mathbf u)$ and
$$\mathcal B(q, \psi, \mathbf u):=\{
\mathbf v\in\mathcal R^d:(\ref{e:two2}) \  {\rm holds} \}.$$

Let
\begin{equation}\label{H}
H:=\left\lfloor\frac1{2\psi(q)}\right\rfloor
\end{equation}
so that $H\ge1$ and $\mathcal H:=[-H,H]\cap\mathbb Z$.  Then
$$\sum_{h\in\mathcal H}\frac{H-|h|}{H^2} \, e(hx)=H^{-2}\left|
\sum_{h=1}^He(hx)
\right|^2=\left(
\frac{\sin \pi H x}{H\sin \pi x}
\right)^2\ge \frac4{\pi^2}$$
whenever $\|x\|\le H^{-1}$.  Thus
$$B(q, \psi, \mathbf u)\ll B^*(q, \psi, \mathbf u)$$
where
\begin{equation}\label{vb1}
B^*(q, \psi, \mathbf u) := \sum_{\mathbf h\in\mathcal H^m} \frac{H-|h_1|}{H^2}\cdots \frac{H-|h_m|}{H^2} \sum_{\mathbf v\in\mathcal R^d} e(\mathbf h.(\mathbf F(\mathbf u,\mathbf v)  - \bm\gamma))
\end{equation}
and
$$
\mathbf h := (h_1,\ldots,h_m) \, ,
$$
$$
\mathbf F:=(F_1,\ldots,F_m) \, ,
$$
$$F_j(\mathbf u,\mathbf v) := qf_j\big(\tfrac{r\mathbf u  + \tilde{\bm\lambda}}{q}\big)+\sum_{i=1}^d v_i \frac{\partial f_j}{\partial\alpha_i}\big(\tfrac{r\mathbf u  + \tilde{\bm\lambda}}{q}\big) \, . $$

\noindent
By the definition of $\mathcal{H}$, we have that
$$
0\le\frac{H-|h_1|}{H^2}\cdots \frac{H-|h_m|}{H^2}\le H^{-m}
$$
for any $\vv h=(h_1,\dots,h_m)\in\mathcal{H}^m$.
Therefore, by \eqref{vb1}, we get that
\begin{equation}\label{vb2}
B^*(q, \psi, \mathbf u) \le H^{-m}\sum_{\mathbf h\in\mathcal H^m} \left|\sum_{\mathbf v\in\mathcal R^d}  e(\mathbf h.(\mathbf F(\mathbf u,\mathbf v)  - \bm\gamma))\right|\,.
\end{equation}
On using the fact that $e(x_1+\dots +x_\ell)=e(x_1)\cdots e(x_\ell)$ and $|e(x)|=1$ for any real numbers $x,x_1,\dots,x_\ell$,  we find  that
\begin{align*}
\left|\sum_{\mathbf v\in\mathcal R^d}  e(\mathbf h.(\mathbf F(\vv u,\vv v)  - \bm\gamma))\right|
&=\left|\sum_{\mathbf v\in\mathcal R^d}  e(\mathbf h.\mathbf F(\vv u,\vv v)) \cdot e( - \mathbf h.\bm\gamma)\right|
=\left|\sum_{\mathbf v\in\mathcal R^d}  e(\mathbf h.\mathbf F(\vv u,\vv v)) \right|\\[2ex]
&=\left|\sum_{\mathbf v\in\mathcal R^d}  e\left(\sum_{j=1}^m h_j\Big(qf_j\big(\tfrac{r\mathbf u  + \tilde{\bm\lambda}}{q}\big)+\sum_{i=1}^d v_i \frac{\partial f_j}{\partial\alpha_i}\big(\tfrac{r\mathbf u  + \tilde{\bm\lambda}}{q}\big)\Big)\right)\right|\\[2ex]
&=\left|\sum_{\mathbf v\in\mathcal R^d}  e\left(\sum_{j=1}^m \sum_{i=1}^d h_jv_i \frac{\partial f_j}{\partial\alpha_i}\big(\tfrac{r\mathbf u  + \tilde{\bm\lambda}}{q}\big)\right)\right|\\[2ex]
&=\left|\sum_{\mathbf v\in\mathcal R^d}  \prod_{i=1}^de\left(v_i\sum_{j=1}^m h_j \frac{\partial f_j}{\partial\alpha_i}\big(\tfrac{r\mathbf u  + \tilde{\bm\lambda}}{q}\big)\right)\right|\\[2ex]
&=\left|\prod_{i=1}^d\sum_{v_i\in\mathcal R}  e\left(v_i\sum_{j=1}^m h_j \frac{\partial f_j}{\partial\alpha_i}\big(\tfrac{r\mathbf u  + \tilde{\bm\lambda}}{q}\big)\right)\right|\,.
\end{align*}
Hence
\begin{align*}
\left|\sum_{\mathbf v\in\mathcal R^d} e(\mathbf h.(\mathbf F(\mathbf u,\mathbf v)  - \bm\gamma))\right| 
&= \prod_{i=1}^d \left|\sum_{v\in\mathcal R}e\left(v\sum_{j=1}^m h_j \frac{\partial f_j}{\partial\alpha_i}\big(\tfrac{r\mathbf u  + \tilde{\bm\lambda}}{q}\big)\right)\right|\,.
\end{align*}
Therefore, by \eqref{vb2}, it follows  that
\begin{equation}\label{vb3}
B^*(q, \psi, \mathbf u)  \ \le \  \frac1{H^m} \sum_{\mathbf h\in\mathcal H^m}\prod_{i=1}^d \left|
\sum_{v\in\mathcal R} e\left(
v\sum_{j=1}^m h_j \frac{\partial f_j}{\partial\alpha_i}\big(\tfrac{r\mathbf u  + \tilde{\bm\lambda}}{q}\big)
\right)
\right|.
\end{equation}

Since $\mathcal{R}=[0,r)\cap\Z$, for any given $\rho\in\R$ we have that $\left|\sum_{v\in\cR}e(v\rho)\right|\le r$ and also that
$$
\left|\sum_{v\in\cR}e(v\rho)\right|=\left|\frac{e(r\rho)-1}{e(\rho)-1}\right|\le\frac{2}{|e(\rho)-1|}\ll\|\rho\|^{-1}\,,
$$
where the implied constant is absolute. Hence, on  taking $\rho=\sum_{j=1}^m h_j \frac{\partial f_j}{\partial\alpha_i}\big(\tfrac{r\mathbf u  + \tilde{\bm\lambda}}{q}\big)$ we have  that
$$
\left|\sum_{v\in\cR}e\left(v\sum_{j=1}^m h_j \frac{\partial f_j}{\partial\alpha_i}\big(\tfrac{r\mathbf u  + \tilde{\bm\lambda}}{q}\big)\right)\right|\ll \min\left(r,\left\|
\sum_{j=1}^m h_j \frac{\partial f_j}{\partial\alpha_i}\big(\tfrac{r\mathbf u  + \tilde{\bm\lambda}}{q}\big)
\right\|^{-1}\right)\,.
$$
This together with \eqref{vb3},  implies that
\begin{equation}\label{vb4}
B^*(q, \psi, \mathbf u)  \ \le \  \frac1{H^m} \sum_{\mathbf h\in\mathcal H^m}\prod_{i=1}^d \min\left(r,\left\|
\sum_{j=1}^m h_j \frac{\partial f_j}{\partial\alpha_i}\big(\tfrac{r\mathbf u  + \tilde{\bm\lambda}}{q}\big)
\right\|^{-1}\right).
\end{equation}

\noindent For a given $\mathbf u\in[0,s]^d$ we consider the intervals $I_i=[u_i-1/2,u_i+1/2]$, unless $u_i=0$ or $u_i=s$ in which case we consider $[u_i,u_i+1/2]$ or $[u_i-1/2,u_i]$ respectively.  For $\beta_i\in I_i$ we have
$$\frac{\partial f_j}{\partial\alpha_i}\big(\tfrac{r\mathbf u  + \tilde{\bm\lambda}}{q}\big) = \frac{\partial f_j}{\partial\alpha_i}\big(\tfrac{r\boldsymbol{\beta} + \tilde{\bm\lambda}}{q}\big) + O(r/q)$$
by the mean value theorem.  Hence
$$\sum_{j=1}^m h_j \left(
\frac{\partial f_j}{\partial\alpha_i}\big(\tfrac{r\mathbf u  + \tilde{\bm\lambda}}{q}\big) - \frac{\partial f_j}{\partial\alpha_i}\big(\tfrac{r\boldsymbol{\beta} + \tilde{\bm\lambda}}{q}\big)
\right) \ll Hr/q$$
where the implicit constant depends at most on $m$ and the size of the second derivatives.  Moreover
$$\frac{Hr^2}q\le \frac{\delta q\psi(q)}{2q\psi(q)} = \frac{\delta}2 <\delta\,,$$
where the left hand side inequality follows from the definitions of $r$ and $H$ -- see \eqref{r} and \eqref{H}.
Hence
$$\left|
\left\|
\sum_{j=1}^m h_j \frac{\partial f_j}{\partial\alpha_i}\big(\tfrac{r\mathbf u  + \tilde{\bm\lambda}}{q}\big)
\right\| -\left\|
\sum_{j=1}^mh_j \frac{\partial f_j}{\partial\alpha_i}\big(\tfrac{r\boldsymbol{\beta} + \tilde{\bm\lambda}}{q}\big)
\right\|
\right| \ll \frac{\delta}r\ll\frac{1}{r}.$$
Thus
$$\min\left(
r, \left\|
\sum_{j=1}^m h_j \frac{\partial f_j}{\partial\alpha_i}\big(\tfrac{r\mathbf u  + \tilde{\bm\lambda}}{q}\big)
\right\|^{-1}
\right)\ll \min\left(
r, \left\|
\sum_{j=1}^m h_j \frac{\partial f_j}{\partial\alpha_i}\big(\tfrac{r\boldsymbol{\beta} + \tilde{\bm\lambda}}{q}\big)
\right\|^{-1}
\right)$$
and furthermore, by considering their product over $i$, we get that
$$\prod_{i=1}^d\min\left(
r, \left\|
\sum_{j=1}^m h_j \frac{\partial f_j}{\partial\alpha_i}\big(\tfrac{r\mathbf u  + \tilde{\bm\lambda}}{q}\big)
\right\|^{-1}
\right)\ll \prod_{i=1}^d\min\left(
r, \left\|
\sum_{j=1}^m h_j \frac{\partial f_j}{\partial\alpha_i}\big(\tfrac{r\boldsymbol{\beta} + \tilde{\bm\lambda}}{q}\big)
\right\|^{-1}
\right).$$
Since the measure of $I_1\times\cdots\times I_d$ is $\asymp1$, integrating the above inequality over $\boldsymbol{\beta}\in I_1\times\cdots\times I_d$ gives that
$$
\prod_{i=1}^d\min\left(
r, \left\|
\sum_{j=1}^m h_j \frac{\partial f_j}{\partial\alpha_i}\big(\tfrac{r\mathbf u  + \tilde{\bm\lambda}}{q}\big)
\right\|^{-1}
\right)\ll \int_{I_1\times\cdots\times I_d}\prod_{i=1}^d\min\left(
r, \left\|
\sum_{j=1}^m h_j \frac{\partial f_j}{\partial\alpha_i}\big(\tfrac{r\boldsymbol{\beta} + \tilde{\bm\lambda}}{q}\big)
\right\|^{-1}
\right)d\boldsymbol{\beta}.
$$
Now recall that the rectangles $I_1\times\cdots\times I_d$ depend on the choice of $\vv u$. Note that their union taken over integer points $\mathbf u\in \mathcal S^d$, where $\mathcal S:=[0,s]$, is exactly $\mathcal{S}^d$. Furthermore, different rectangles can only intersect on the boundary. Hence
summing the above displayed inequality over all integer points $\mathbf u\in \mathcal S^d$ gives
$$
\sum_{\mathbf u\in \mathcal S^d} \prod_{i=1}^d\min\left(
r, \left\|
\sum_{j=1}^m h_j \frac{\partial f_j}{\partial\alpha_i}\big(\tfrac{r\mathbf u  + \tilde{\bm\lambda}}{q}\big)
\right\|^{-1}
\right)\ll \int_{\mathcal{S}^d}\prod_{i=1}^d\min\left(
r, \left\|
\sum_{j=1}^m h_j \frac{\partial f_j}{\partial\alpha_i}\big(\tfrac{r\boldsymbol{\beta} + \tilde{\bm\lambda}}{q}\big)
\right\|^{-1}
\right)d\boldsymbol{\beta}.
$$
Now combining this together with \eqref{vb4} we obtain that
\begin{equation}
\label{e:two3}
\sum_{\mathbf u\in \mathcal S^d} B^*(q, \psi, \mathbf u) \ll H^{-m} \sum_{\mathbf h\in \mathcal H^m} \int_{\mathcal S^d} \prod_{i=1}^d\min\left(
r, \left\|
\sum_{j=1}^m h_j \frac{\partial f_j}{\partial\alpha_i}\big(\tfrac{r\boldsymbol{\beta} + \tilde{\bm\lambda}}{q}\big)
\right\|^{-1}
\right) d\boldsymbol{\beta}.
\end{equation}

\noindent  Now finally observe  that

\begin{equation}
\label{e:svtwo3}
A(q, \psi, \bm\theta) \leq  \sum_{\mathbf u\in \mathcal S^d} A(q, \psi, \bm\theta, \mathbf u)   \, \le \, \sum_{\mathbf u\in \mathcal S^d} B(q, \psi, \mathbf u) \, \ll \,
\sum_{\mathbf u\in \mathcal S^d} B^*(q, \psi, \mathbf u) \,  .
\end{equation}

\Section{The proof of Theorem \ref{t:svthm1}}

\noindent With reference to \S\ref{prelims}, by (\ref{e:two3})
\begin{equation*}
\sum_{\mathbf u\in \mathcal S^d} B^*(q, \psi, \mathbf u) \ll r^{d-1}H^{-m} \sum_{\mathbf h\in \mathcal H^m} \int_{\mathcal S^d} \min\left(
r, \left\|
\sum_{j=1}^m h_j \frac{\partial f_j}{\partial\alpha_1}\big(\tfrac{r\boldsymbol{\beta} + \tilde{\bm\lambda}}{q}\big)
\right\|^{-1}
\right) d\boldsymbol{\beta}.
\end{equation*}
Since (1.1) holds we may make the change of variables
$$\omega_j= \frac{\partial f_j}{\partial\alpha_1}\big(\tfrac{r\boldsymbol{\beta} + \tilde{\bm\lambda}}{q}\big)\quad(1\le j\le m),\, \qquad \omega_j=\beta_j\quad(m<j\le d).$$
Thus
\begin{equation}\label{vb6}
\sum_{\mathbf u\in \mathcal S^d} B^*(q, \psi, \mathbf u) \ \ll  \  \frac{r^{d-1}}{H^m} \sum_{\mathbf h\in \mathcal H^m} \left(
\frac{q}r
\right)^m\int_{\mathcal J_d} \min\left(
r, \left\|
\sum_{j=1}^m h_j \omega_j
\right\|^{-1}
\right) d\boldsymbol{\omega}
\end{equation}
where
$\mathcal J_d:=\mathcal F_1\times\cdots \times\mathcal F_m \times [0,s]^{d-m}$,  $\mathcal F_j:=[f_j^-,f_j^+]$ and
$$f_j^-:=\inf \frac{\partial f_j}{\partial \alpha_1}(\boldsymbol{\alpha})$$
and
$$f_j^+:=\sup \frac{\partial f_j}{\partial \alpha_1}(\boldsymbol{\alpha}).$$
The contribution from $\mathbf h=\mathbf 0$ is
\begin{equation*}
\ll\frac{r^{d-1}}{H^m}\left(\frac qr\right)^m\int_{\mathcal{J}_d}rd\bm\omega\ll
\frac{r^{d-m}}{H^m}\,q^ms^{d-m}
\ll H^{-m}q^d
\end{equation*}
since $rs\asymp q$. Next observe that 
$$
M:=\int_{\mathcal F_1\times\cdots \times\mathcal F_m} \min\left(
r, \left\|
\sum_{j=1}^m h_j \omega_j
\right\|^{-1}
\right) d\omega_1\ldots d\omega_m
$$
is constant with respect to $\omega_{m+1},\dots,\omega_d$. 
Hence, by Fubini's theorem and the fact that $\mathcal J_d:=\mathcal F_1\times\cdots \times\mathcal F_m \times [0,s]^{d-m}$, integrating $M$ over $(\omega_{m+1},\dots,\omega_d)\in[0,s]^{d-m}$ gives that
\begin{equation}\label{vvv}
\int_{\mathcal J_d} \min\left(
r, \left\|
\sum_{j=1}^m h_j \omega_j
\right\|^{-1}
\right) d\boldsymbol{\omega}
= s^{d-m}M\,.
\end{equation}
If $\vv h\neq\vv0$, then assuming for example that $h_1\neq0$ and using Fubini's theorem again we get that
\begin{align*}
M &=\int_{\mathcal F_1\times\cdots \times\mathcal F_m} \min\left(
r, \left\|
\sum_{j=1}^m h_j \omega_j
\right\|^{-1}
\right) d\omega_1\ldots d\omega_m\\[2ex]
&\ll 
\sup_{\rho\in[0,1]}\int_{\mathcal F_1} \min\left(
r, \left\|
h_1 \omega_1-\rho
\right\|^{-1}
\right) d\omega_1\nonumber\\[2ex]
&\ll \sup_{\rho\in[0,1]}~~\sum_{\substack{p\in\Z\\[0.5ex] |p|\ll h_1}}\int_{\mathcal F_1} \min\left(
r, |h_1 \omega_1-\rho-p|^{-1}\right) d\omega_1\nonumber\\[2ex]
&\ll \sup_{\rho\in[0,1]}~~\sum_{\substack{p\in\Z\\[0.5ex] |p|\ll h_1}}\left(\frac{1}{h_1r}+\frac{1}{h_1}\log r\right)\nonumber\\[2ex]
&\ll
\max\{1,\log r\}\,.
\end{align*}
Hence, by the above inequalities and \eqref{vvv}, the contribution from the $\vv h\neq\vv0$ terms within \eqref{vb6} is estimated by
\begin{align*}
&\frac{r^{d-1}}{H^m} \sum_{\mathbf h\in \mathcal H^m} \left(
\frac{q}r
\right)^m s^{d-m}\max\{1,\log r\}\nonumber\\[2ex]
&\ll r^{-1}(rs)^{d-m}q^m\max\{1,\log r\}\nonumber\\[2ex]
&\ll r^{-1}q^d\max\{1,\log r\}.
\end{align*}
In view of (\ref{e:svtwo3}), it follows that
$$
A(q, \psi, \bm\theta)  \ \ll \ H^{-m}q^d   \, + \, r^{-1}q^d\max\{1,\log r\} \ .
$$
Given the definitions of $H$ and $r$ this gives (\ref{sv2}) and thereby completes the proof of the theorem.

\Section{The proof of Theorem \ref{t:svthm3}}

\noindent
Recall that within Theorem~\ref{t:svthm3} we have that $m=1$ and $d=n-1$. Hence, with reference to \S\ref{prelims}, (\ref{e:two3}) becomes
\begin{equation*}
\sum_{\mathbf u\in \mathcal S^d} B^*(q, \psi, \mathbf u) \ll H^{-1} \sum_{h\in \mathcal H} \int_{\mathcal S^d} \prod_{i=1}^d\min\left(
r, \left\|
h \frac{\partial f}{\partial\alpha_i}\big(\tfrac{r\boldsymbol{\beta} + \tilde{\bm\lambda}}{q}\big)
\right\|^{-1}
\right) d\boldsymbol{\beta}\,,
\end{equation*}
where $\vv f=f:\cU  \to \R$. Since  (\ref{e:one3}) holds we may make the change of variables

$$\omega_i= \frac{\partial f}{\partial\alpha_i}\big(\tfrac{r\boldsymbol{\beta} + \tilde{\bm\lambda}}{q})\quad(1\le i\le d).$$
Thus
$$
\sum_{\mathbf u\in \mathcal S^d} B^*(q, \psi, \mathbf u) \ll H^{-1} \sum_{h\in \mathcal H} \left(
\frac{q}r
\right)^d\int_{\mathcal J_d} \prod_{i=1}^d\min\left(
r, \left\|
h \omega_i
\right\|^{-1}
\right) d\boldsymbol{\omega}$$
where
$\mathcal J_d:=\mathcal F_1\times\cdots\times\mathcal F_d$,  $\mathcal F_i:=[f_i^-,f_i^+]$ and
$$f_i^-:=\inf \frac{\partial f}{\partial \alpha_i}(\boldsymbol{\alpha})$$
and
$$f_i^+:=\sup \frac{\partial f}{\partial \alpha_i}(\boldsymbol{\alpha}).$$
The contribution from $h=0$ is
$$
\ll H^{-1}  \left(
\frac{q}r
\right)^d\int_{\mathcal J_d} r^d d\boldsymbol{\omega}
\ll H^{-1}q^d$$
and the contribution from the remaining terms is
\begin{align*}
&\ll H^{-1} \sum_{h\in \mathcal H\setminus\{0\}} \left(
\frac{q}r\right)^d\int_{\mathcal J_d} \prod_{i=1}^d\min\left(r, \left\|h \omega_i\right\|^{-1}\right) d\boldsymbol{\omega}\\[2ex]
&= H^{-1} \sum_{h\in \mathcal H\setminus\{0\}} \left(
\frac{q}r\right)^d \prod_{i=1}^d\int_{\mathcal F_i}\min\left(r, \left\|h \omega_i\right\|^{-1}\right) d\omega_i\\[2ex]
&\ll H^{-1} \sum_{h\in \mathcal H\setminus\{0\}} \left(
\frac{q}r\right)^d \prod_{i=1}^d\max\{1,\log r\}\\[2ex]
&\ll r^{-d}q^d\max\{1,(\log r)^d\}.
\end{align*}
In view of (\ref{e:svtwo3}), it follows that
$$
A(q, \psi, \bm\theta)  \ \ll \ H^{-1}q^d   \, + \, r^{-d}q^d \max\{1,(\log r)^d\} \ .
$$
Given the definitions of $H$ and $r$ this gives (\ref{sv22}) and thereby completes the proof of the theorem.

\Section{Proof of Theorem \ref{t:genconv}}

\noindent{\em Step 1. \ } As mentioned in \S\ref{convintro},   in view of the  Implicit  Function Theorem, we can assume without loss of generality that the   manifold $\cM_{\mathbf f}$  is of the Monge form (\ref{monge}). Note that, since $\cU$ is compact and $\vv f$ is $C^1$, this implies via the Mean Value Theorem that $\mathbf f=(f_1,\ldots,f_m)$ is bi-Lipschitz and so there exists a constant  $ c_1  \ge 1$    such that
\begin{equation} \label{bi}
 \max_{1\le i\le m} |f_i(\boldsymbol{\alpha}) - f_i  (\boldsymbol{\alpha}')|  \,  \le   \, c_1   \;  |\boldsymbol{\alpha}   \, -  \,  \boldsymbol{\alpha}'|  \qquad \forall   \quad \boldsymbol{\alpha}  , \boldsymbol{\alpha}'  \in   \mathcal U=[0,1]^d  \; .
\end{equation}
Let  $\Omega_n^{\mathbf f}(\psi,\bm\theta)$ denote the projection of   $ \cM_{\mathbf f}\cap \cS_n(\psi,\bm\theta) $  onto $ \mathcal U$; that is
$$
\Omega_n^{\mathbf f}(\psi,\bm\theta) := \{\boldsymbol{\alpha} \in \mathcal U  \, : \, (\boldsymbol{\alpha},\mathbf f(\boldsymbol{\alpha})) \in  \cS_n(\psi,\bm\theta)  \} \, .
$$
Explicitly, given $\bm\theta=(\bm\lambda,\bm\gamma) \in \R^d \times \R^m$, the set  $\Omega_n^{\mathbf f}(\psi,\bm\theta) $ consists of points  $\boldsymbol{\alpha} \in \mathcal U $    such that the system of
inequalities
\begin{equation}\label{sv100}
\left\{
\begin{array}{l}
  \big|\alpha_i -\frac{a_i + \lambda_i}{q}\big|<\frac{\psi(q)}{q}   \qquad 1 \le i \le d  \\[2ex]
  \big|f_j(\boldsymbol{\alpha})-\frac{ b_j + \gamma_j}{q}\big|<\frac{\psi(q)}{q}   \qquad 1 \le j \le m
\end{array}
\right.
\end{equation}
is satisfied for infinitely many   $  (q, \mathbf a, \mathbf b) \in \N  \times \Z^d \times \Z^m $. Furthermore,
there is  no loss of generality in assuming that $ \alq  \in \mathcal U$
for solutions of (\ref{sv100}).   In view of (\ref{bi}),    the sets $\Omega_n^{\mathbf f}(\psi,\bm\theta)$ and    $ \cM_{\mathbf f}\cap \cS_n(\psi,\bm\theta) $ are related by a bi-Lipschitz map and therefore
$$
\hs \left( \cM_{\mathbf f}\cap \cS_n(\psi,\bm\theta) \right) = 0    \quad
\Longleftrightarrow \quad  \hs ( \Omega_n^{\mathbf f}(\psi,\bm\theta)  ) = 0 \ .
$$
Hence, it suffices to show that
\begin{equation}\label{sv101} \hs ( \Omega_n^{\mathbf f}(\psi,\bm\theta)  ) = 0 \ .
\end{equation}

\noindent{\em Step 2. \ } Notice that the set $B=\{ \boldsymbol{\alpha} \in \mathcal U:  \  {\rm l.h.s. \ of \ } (\ref{e:one1})    = 0  \}$ is closed and therefore $G=
\cU\setminus B $ can be written as a countable union of closed rectangles $\cU_i$ on which $f$
satisfies (\ref{e:one1}).   The constant $\eta$ associated with (\ref{e:one1}) depends on the particular choice of $\cU_i$. For the moment, assume that $\hs ( \Omega_n^{\mathbf f}(\psi,\bm\theta) \cap \cU_i  ) = 0$ for any $i \in {\mathbb N}$. On using the fact
that $ \hs(B) = 0 $, we have that
\begin{eqnarray*}\hs ( \Omega_n^{\mathbf f}(\psi,\bm\theta)   )  & \leq &  \hs\Big( B \cup \big(\mbox{ \small
$\bigcup\limits_{i=1}^{\infty}$ } \Omega_n^{\mathbf f}(\psi,\bm\theta) \cap \cU_i \big)\Big)  \\[1ex]
& \leq & \hs( B ) \, + \,  \mbox{ \small
$\sum\limits_{i=1}^{\infty}$ } \hs \big( \Omega_n^{\mathbf f}(\psi,\bm\theta) \cap \cU_i  \big)  \;  = \;  0
\end{eqnarray*}
and this establishes (\ref{sv101}). Thus, without loss of generality,
and  for the sake of clarity we assume that $f$ satisfies
(\ref{e:one1}) on $\mathcal U$.

\noindent{\em Step 3. \ }  For a point $\tfrac{\mathbf p + \bm\theta}{q}\in\R^n$ with  $\mathbf p =  (\mathbf a, \mathbf b) \in  \Z^d \times \Z^m $, let
 $\sigma\big(\tfrac{\mathbf p + \bm\theta}{q}\big)$  denote the set of $\boldsymbol{\alpha} \in \mathcal U $ satisfying (\ref{sv100}).
Trivially, \begin{equation} \label{sv103} {\rm diam}\big(\sigma\big(\tfrac{\mathbf p + \bm\theta}{q}\big)\big)  \,
\ll \psi(q)/q  \, , \end{equation}
where the implied constant depends on $n$ only.

\noindent  Assume that $\sigma\big(\tfrac{\mathbf p + \bm\theta}{q}\big)\not=\emptyset$.  Thus $q$ lies in the integer support $\mathcal N$  of $\psi$.   Let  $  \boldsymbol{\alpha}   \in  \sigma\big(\tfrac{\mathbf p + \bm\theta}{q}\big)  $. The triangle inequality together with  (\ref{bi}) and (\ref{sv100}), implies that
\begin{eqnarray*}
\textstyle{ \big| \mathbf f \big(\alq\big)  -  \frac{\mathbf b + \boldsymbol{\gamma}}{q}\big|  } & \le &
\textstyle{ \big| \mathbf f (\frac{\mathbf a + \boldsymbol{\lambda}}{q})  - \mathbf f (\boldsymbol{\alpha})  \big| \ + \ \big| \mathbf f (\boldsymbol{\alpha})  -  \frac{\mathbf b + \boldsymbol{\gamma}}{q}\big|   }  \\[1ex]
& < &   c_1 \,    \textstyle{ \big| \boldsymbol{\alpha}  - \frac{\mathbf a + \boldsymbol{\lambda}}{q}   \big|    \ +  \ \psi(q)/q   } \\[1ex]
& \le   &
c_2 \, \psi(q)/q   \ , \end{eqnarray*} where  $c_2:= 1+ c_1 $ is a constant. Thus,  for $q$ sufficiently large so that $c_2 \, \psi(q) < 1/2$ we have that
\begin{eqnarray*}
\# \Big\{ \mathbf p      \in \Z^n
 \!\!\!\! &:& \!\!\!\! \sigma(\textstyle{\frac{\mathbf p + \bm\theta}{q} }  )\not=\emptyset \Big\}   \\[2ex] & \leq &
\#  \left\{ \mathbf p      \in \Z^n :  \, \textstyle{\frac{\mathbf a + \bm\lambda}{q} }     \in \mathcal U,  \,
\textstyle{ \big| \mathbf f (\frac{\mathbf a + \boldsymbol{\lambda}}{q})  -  \frac{\mathbf b + \boldsymbol{\gamma}}{q}\big|   <
c_2
\, \psi(q)/q} \right\} \\[2ex]
& = & \# \left\{
\mathbf a  \in \mathbb Z^d  \; : \;  \, \textstyle{\frac{\mathbf a + \bm\lambda}{q} }     \in \mathcal U,  \,
\textstyle{ \|q \, \mathbf f\big(\tfrac{\mathbf a + \bm\lambda}{q}\big) - \bm\gamma \| < c_2 \psi(q) } \, \right\}  \ .
\end{eqnarray*}
By definition, the right hand side is simply the counting function
$ A(q,c_2 \psi,\bm\theta)$. Thus, by Corollary \ref{c:svcor1}, for $q \in \mathcal N$ sufficiently large we have that
\begin{equation}
\label{sv104} \# \Big\{ \mathbf p      \in \Z^n
: \sigma(\textstyle{\frac{\mathbf p + \bm\theta}{q} }  )\not=\emptyset \Big\} \ll \psi(q)^m \, q^{d} \ .
\end{equation}

\noindent{\em Step 4. \ }  For $q >0$, let
$$
\Omega_n^{\mathbf f}(\psi,\bm\theta;q)  \ :=  \bigcup_{\mathbf p      \in \Z^n ,\,\sigma(\frac{\mathbf p + \bm\theta}{q}   )\not=\emptyset}
\!\!\!\!\!\!\!\! \sigma\big(\tfrac{\vv p + \bm\theta}{q}\big)  \  \  .
$$

\noindent Then  $\hs (  \Omega_n^{\mathbf f}(\psi,\bm\theta))  = \hs ( \limsup_{q \to \infty}
\Omega_n^{\mathbf f}(\psi,\bm\theta;q)  )  $  and the  Hausdorff-Cantelli Lemma \cite[p.~68]{BD99} implies
(\ref{sv101}) if
\begin{equation}\label{sv105} \sum_{q=1}^{\infty}  \quad   \sum_{\mathbf p      \in \Z^n ,\,\sigma(\frac{\mathbf p + \bm\theta}{q}   )\not=\emptyset}
\!\!\!\!\!\!\!\! \Big({\rm diam} \big(\sigma\big(\tfrac{\vv p+\bm\theta}{q}\big)\big)\Big)^s < \infty  \, .  \end{equation}
In view of (\ref{sv103}) and (\ref{sv104}), it follows
that
\begin{eqnarray*}
{\rm  L.H.S  \ of \ (\ref{sv105})}  &    \ll &  \sum_{q\in \mathcal N}  \quad   \sum_{\mathbf p      \in \Z^n ,\,\sigma(\frac{\mathbf p + \bm\theta}{q}   )\not=\emptyset}
\!\!\!\!\!\!\!\! (\psi(q)/q)^s
\\ \\ & \ll &
\sum_{q  \in \mathcal N}  (\psi(q)/q)^s \times \psi(q)^m \, q^{d}  \,  \
=  \ \sum_{q=1}^{\infty} (\psi(q)/q)^{s+m} \, q^{n} \  < \ \infty \ \ .
\end{eqnarray*}

\noindent This completes the proof of Theorem \ref{t:genconv}.

%NOTES:
%
%

%
%Recall, that non-degenerate manifolds $\cM$ are essentially
% smooth sub-manifolds of $\R^m$ which are sufficiently
%curved so as to deviate from any hyperplane. Formally, a  manifold
%$\cM$ of dimension $d$ embedded in $\R^m$ is said to be
%non-degenerate if it arises from a non--degenerate map $\mathbf f:U\to
%\R^m$ where $U$ is an open subset of $\R^d$ and $\cM:=\mathbf f(U)$. The
%map $\f:U\to \R^m:\bu\mapsto \f(\bu)=(f_1(\bu),\dots,f_m(\bu))$ is
%said to be \emph{non--degenerate at} $\bu\in U$ if there exists
%some $l\in\N$ such that $\f$  is $l$ times continuously
%differentiable on some sufficiently small ball centred at $\bu$
%and the partial derivatives of $\f$ at $\bu$ of orders up to $l$
%span $\R^m$. The map $\f$ is \emph{non--degenerate} if it is
%non--degenerate at almost every (in terms of $d$--dimensional
%Lebesgue measure) point in $U$; in turn the manifold $\cM=\f(U)$
%is also said to be non--degenerate.  Any real, connected analytic
%manifold not contained in any hyperplane of $\R^m$  is
%non--degenerate.
%

\vspace{4ex}

\noindent{\bf Acknowledgements.}  We would like to thank the referees for
their  thorough  reports which enabled us to remove a number of
inaccuracies and  improve the clarity of the paper. Thank you both!
SV would like to thank Shoeb Khan for his wonderful friendship and for helping him to keep his curry muscle under control.  Also many thanks to the teenagers,   Ayesha
and Iona, and their mother, Bridget for much happiness, love and laughter.

\end{document}